\documentclass[11pt]{article}

\usepackage[a4paper,margin=1in]{geometry}

\usepackage{amsmath,color}
\usepackage{amssymb}
\usepackage{amsthm}
\usepackage{mathtools}

\usepackage[T1]{fontenc}
\usepackage{lmodern}
\usepackage{microtype}

\usepackage{enumitem}
\usepackage{graphicx}
\usepackage{booktabs}
\usepackage{bbm}

\usepackage[hidelinks]{hyperref}
\usepackage[nameinlink,capitalize,noabbrev]{cleveref}

\newtheorem{theorem}{Theorem}[section]

\newtheorem{lemma}[theorem]{Lemma}
\newtheorem{proposition}[theorem]{Proposition}

\theoremstyle{definition}

\theoremstyle{remark}

\newcommand{\eps}{\varepsilon}

\numberwithin{equation}{section}

\newcommand{\farc}{\frac}
\newcommand{\be}{\begin{equation}}  
\newcommand{\ee}{\end{equation}}  

\newcommand{\bal}{\begin{aligned}}  
\newcommand{\enbal}{\end{aligned}}

\newcommand{\Rm}{{\mathbb R}}
\newcommand{\cT}{{\cal T}}

\newcommand{\cF}{{\cal F}}
\newcommand{\cS}{{\cal S}}

\newcommand{\tW}{\widetilde W}
\DeclareMathOperator{\pbmo}{p-BMO}
\newcommand{\one}{{\mathbbm{1}}}

\title{On the uniform bound of solutions to a thermo-diffusive system}
\author{Kirill Leontev\footnote{Department of Mathematics, Stanford University,
Stanford, CA 94305, USA; leontev@stanford.edu} 
\and Lenya Ryzhik\footnote{Department of Mathematics, Stanford University,
Stanford, CA 94305, USA; ryzhik@stanford.edu}}
\date{}
\begin{document}

\maketitle

\begin{abstract}
We prove a global-in-time uniform bound for solutions to thermo-diffusive systems
of reaction-diffusion equations with $L^\infty$ initial conditions.  
\end{abstract}

\section{Introduction}

We consider non-negative solutions to a thermo-diffusive system of the form
\begin{equation}\label{main-PDE}
\bal
u_t=\nu\Delta u-u g(v),\\
v_t=\kappa\Delta v+u g(v),
\enbal
\end{equation}
in the whole space $x\in\Rm^d$. In this simple model, $u(t,x)$ represents
the fuel concentration and~$v(t,x)$ is the fuel temperature. 
We assume that initial conditions are bounded: 
there exists~$K\ge 1$ so that~$u_0(x)=u(0,x)$ and $v_0(x)=v(0,x)$ satisfy 
\begin{equation}\label{main-PDE-initial-conditions}
0\leq u_0(x)\leq K,
\qquad
0\leq v_0(x)\leq K.
\end{equation}

For the nonlinearity, we assume that the reaction \(g\in C^1([0,\infty))\) satisfies
\begin{equation}\label{26jul2602}
g(0)=0,
\end{equation}
and that there exists $\rho\in(0,1)$ so that 
\begin{equation}\label{26aug1702}
0\le g(v)\leq c_2e^{Zv^\rho},~~\hbox{ for all $v>0$,}
\end{equation}
and, finally, that
\begin{equation}\label{g-liminf-bound}
    \liminf_{v \rightarrow \infty}g(v) > 0.
\end{equation}
Under these assumptions, the comparison principle implies immediately that 
\begin{equation}
u(t,x)>0,~~v(t,x)>0,~~\hbox{ for all $t>0$ and $x\in\Rm^d$.}
\end{equation}
This, in turn, implies that 
\begin{equation}\label{26jul2616}
u(t,x)\le \sup_{x\in\Rm^d}u_0(x)\le K,
\end{equation}
because of the upper bound on $u_0(x)$ in \eqref{main-PDE-initial-conditions}
and non-negativity of $g(v)$ for $v\ge 0$. 
The main result of this paper is the following uniform bound on the fuel temperature $v(t,x)$.
\begin{theorem}\label{thm-26aug902}
Let $u(t, x)$ and $v(t, x)$ be  solutions to \eqref{main-PDE}
under the assumptions \eqref{main-PDE-initial-conditions}-\eqref{g-liminf-bound} on initial conditions $u_0$ and $v_0$
and the nonlinearity $g(s)$. 
Then there exists $C_0>0$ so that 
\begin{equation}\label{26jul2604}
0< v(t,x)\leq C_0,~~\hbox{ for all $x\in\Rm^d$ and $t>0$.} 
\end{equation}
\end{theorem}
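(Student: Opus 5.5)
We propose to combine a conservation law with a short-time parabolic estimate that only requires the reaction rate to be large on the set where $v$ is large. Set $w=u+v$, so that $w_t=\kappa\Delta w+(\nu-\kappa)\Delta u$. When $\nu=\kappa$ the comparison principle gives $w\le 2K$ at once. In general we will not use $w$ pointwise. Instead we use two facts: the integral of $u+v$ over any unit cube stays controlled, and the total heat that can be produced locally is bounded by the local fuel, since $\int_0^\infty u g(v)\,dt$ is controlled by $u_0$ and diffusion.

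\textbf{Step 1 (local mass bounds).} By Duhamel's formula for $u$ and $u\le K$, we have, uniformly in $t$ and in the translate,
\[
\int_{Q}\int_t^{t+1} u g(v)\,ds\,dx \le C(K).
\]
To see this, integrate the $u$-equation against a fixed smooth, exponentially decaying weight $\phi(x-x_0)$ with $|\Delta\phi|\le C\phi$. This gives
\[
\frac{d}{dt}\int u\phi+\int ug(v)\phi\le C\nu\int u\phi\le C K .
\]
Doing the same for $u+v$ gives
\[
\frac{d}{dt}\int (u+v)\phi \le C\int (u+v)\phi .
\]
That bound grows exponentially, which is not good enough. We will get uniform-in-time control of $\int v\phi$ from the heat equation $v_t=\kappa\Delta v+f$ with source $f=ug(v)$. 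Writing
\[
v(t)=e^{\kappa t\Delta}v_0+\int_0^t e^{\kappa(t-s)\Delta}f(s)\,ds,
\]
the key claim is that the space-time integral of $f$ against the heat kernel is bounded. This follows because $f=\nu\Delta u-u_t$: integrating by parts in time and moving $\Delta$ onto the kernel, the factor $\nu\Delta-\partial_s$ acting on the backward kernel $G_\kappa(t-s)$ gives $(\nu-\kappa)\Delta G_\kappa$. This term is bounded in $L^1$ by $C(t-s)^{-1}$, which is non-integrable at both ends, so we keep it only on $s\in(0,t-1)$. There
\[
\int_0^{t-1}\|u\|_\infty \|\Delta G\|_1\,ds\lesssim K\log t .
\]
This still grows, and removing this logarithmic growth is the main obstacle.

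\textbf{Step 2 (using $\liminf g>0$).} Let $g\ge\gamma>0$ for $v\ge M$. Where $v$ is large, $u$ is exponentially damped: on the set $\{v\ge M\}$, $u$ decays at rate $\gamma$. We argue by a continuity/contradiction scheme. Suppose $v$ first reaches a large level $L$ at $(t_0,x_0)$. Since $u\le K$, the source is at most $Kc_2e^{ZL^\rho}$. A standard $L^1\to L^\infty$ parabolic estimate on the unit cylinder behind $(t_0,x_0)$ then gives
\[
L\le C\Big(\|v\|_{L^1(\text{cylinder})}+\int_{\text{cyl}}ug(v)\Big)+\text{(contribution near the tip)}.
\]
The contribution near the tip is handled with $\rho<1$. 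On a parabolic ball of radius $r=e^{-ZL^\rho/2}$, the source contributes at most $Kc_2e^{ZL^\rho}r^2\le C$. Outside that ball we use the $L^1$ bounds, which cost a factor $r^{-d}=e^{dZL^\rho/2}$. For this to yield a contradiction we need the local integral of $ug(v)$ near a high point to be exponentially small in $L^\rho$. That is exactly what Step 2 provides: before $v$ reaches $L$ it must spend time above $M$ nearby, and during that time $u$ is depleted like $e^{-\gamma s}$, while $v$ grows by at most $\int ug(v)$. An iteration over levels $M,2M,\dots$ in the style of De Giorgi should then show that the fuel available near $(t_0,x_0)$ is too small to push $v$ from $L/2$ to $L$.

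\textbf{Step 3.} Uniform $L^1_{loc}$ bounds on $v$ come from the decomposition into a bounded diffusive part plus a part produced by burning. The burning part is controlled by Step 1, and the slowly growing $\log t$ term is absorbed because burning effectively stops where $u$ is exhausted. Combining this with Step 2 gives $v\le C_0$. I expect Step 2, namely the quantitative depletion of $u$ near high values of $v$ that beats the sub-exponential growth $e^{Zv^\rho}$, to be the hardest part. I would first try to handle it by studying $u e^{\lambda v}$ or $u\,h(v)$ for a carefully chosen $h$, looking for a supersolution-type inequality.
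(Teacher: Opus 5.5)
Your proposal has a genuine gap exactly where you locate the main obstacle: nothing in it gives a time-uniform bound on local averages of $v$ when $\kappa>\nu$.

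Step~1 is correct as far as it goes. The weighted estimate does give $\int_t^{t+1}\!\int u\,g(v)\phi\le C(K)$. Your integration by parts gives heat-semigroup averages of $v$ of order $K\log t$, which is essentially the known logarithmic bound.

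Step~3, however, just asserts that the $\log t$ is ``absorbed because burning effectively stops where $u$ is exhausted.'' That sentence is the whole difficulty, not a step of a proof. Heat spreads with diffusivity $\kappa$, faster than fuel. So the $O(1)$ amounts of heat released per unit time can pile up non-locally, and knowing that fuel is burnt where $v$ is large does not prevent this.

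The paper closes this gap with three ingredients that have no counterpart in your plan:
\begin{itemize}
\item the smoothed Martin--Pierre bound $[\cS_\kappa(\tau)\tW(t,t-\tau,\cdot)](x)\le M_{\kappa,\nu}[\cS_\kappa(2\tau)u(t-\tau,\cdot)](x)$ of Lemma~\ref{lem:force-gen-boundbis}, which controls the heat produced in one time step by a $\kappa$-average of the fuel;
\item the relative fuel-loss estimate $u(t,x)\le\eta[\cS_\kappa(\tau_\eta)u(t-\tau_\eta,\cdot)](x)$ of Lemma~\ref{lem:hot-zones-u-lossbis} on the hot set;
\item a backward Duhamel iteration in steps of length $\tau_\eta$ with a hot/cold splitting at each step. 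The cold contributions telescope through $\cF^t_n[1]=\cT^t_{n-1}[1]-\cT^t_n[1]$, and the hot fuel contributions sum as a geometric series in $\eta$.
\end{itemize}

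The depletion mechanism in Step~2 is also not valid as stated. On $\{v\ge M\}$ you only have $u_t\le\nu\Delta u-\gamma u$. The diffusion term can keep resupplying fuel from neighbouring cold regions, so there is no pointwise $e^{-\gamma s}$ decay; the set $\{v\ge M\}$ may be thin.

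A depletion statement needs a quantitative hypothesis: the cold set $\{v<\vartheta\}$ must occupy only a small fraction of a parabolic cylinder of size $\sqrt{\tau_\eta}$. The paper obtains this from the parabolic BMO bound on $v$ and the John--Nirenberg inequality, on cylinders where the average of $v$ is large. Even then, the conclusion (via $\Gamma\le\eta G_\kappa$) is only a fixed factor $\eta$ after time $\tau_\eta$, not smallness exponential in $L^\rho$.

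Your contradiction argument needs the local heat release near a high point to be at most of order $Le^{-dZL^\rho/2}$. This is not established, and the De Giorgi iteration meant to produce it is only announced.

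Likewise, the paper passes from bounds on averages to pointwise bounds, which is where $\rho<1$ is used, through the parabolic BMO estimate and Proposition~\ref{prop-26jul2602}. That step also supplies the bound $v\le M_\eta$ off the hot set, which feeds the telescoping. Your plan has no working substitute for it. In any case, that step presupposes the uniform local $L^1$ bounds that Step~3 does not deliver.
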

Let us recall  
that solutions to a single nonlinear heat equation
\[
\partial_tv=\Delta v+v^p,
\]
which corresponds to formally setting $u\equiv 1$ in the second equation in~\eqref{main-PDE},
may blow up in finite time~\cite{F1966}. Therefore, any uniform bound on the fuel temperature
must involve a non-trivial  interaction between the two equations in~\eqref{main-PDE}.
The simplest example of such interaction is that when $\kappa=\nu$,
solutions to~\eqref{main-PDE} are clearly uniformly bounded because 
the function~$z(t,x)=u(t,x)+v(t,x)$ solves the linear heat equation
\[
\partial_tz=\kappa\Delta z,
\]
so that both $u(t,x)$ and $v(t,x)$ are globally bounded in time.  
In addition, when $\kappa\le\nu$, the so-called Martin-Pierre trick~\cite{MartinPierre}
immediately shows that $v(t,x)$ is also uniformly bounded in time. We discuss it in 
Section~\ref{sec:martin-pierre} below. 
On the other hand, boundedness of the  temperature~$v(t,x)$ is a surprisingly  much more
difficult question when $\kappa\ge\nu$ and requires a much finer understanding of the interactions between
the two equations in~\eqref{main-PDE}. 

We refer to the introduction to~\cite{LaRoqRyz} for a recent review
of the existing literature on the problem, especially for the  
many surprises of the behavior of the solutions to reaction-diffusion systems in bounded domains, of which~\cite{P2010} gives a fascinating overview.
We should also mention the recent papers on reaction-diffusion equations
with reversible reactions: see~\cite{DFT2017,FLT2018} and references therein. Systems of thermo-diffusive type but with an additional heat loss were studied
in~\cite{BHKR2005,BN1992,Ducrot2021,HR2005,HR2010}, but all of these papers 
used the heat loss in essential ways.

Let us now describe a few of the existing results on the uniform bounds for the solutions to  the thermo-diffusive system~\eqref{main-PDE}
in the whole space for bounded initial conditions, without 
any decay at infinity of the initial fuel concentration $u_0$, in the ``difficult'' case $\kappa\ge\nu$.
To the best of our knowledge, the first non-trivial bound was obtained
by Collet and Xin in~\cite{CX1996}, where it was shown that $v(t,x)$ satisfies an upper bound
\begin{equation}
v(t,x)\le C_0\log\log t,~~\hbox{ for $t\gg 1$,}
\end{equation}
for polynomially growing nonlinearities $g(v)$. For an exponentially growing 
nonlinearity $g(v)$, global-in-time
existence of the solutions to~\eqref{main-PDE} was proved in~\cite{HLV} by  Herrero,  Lacey, and  Velazquez.
While no explicit
bounds on the solutions are stated in that paper, the  methods of~\cite{HLV}
are believed to yield an a priori bound of the type
\begin{equation}\label{26aug1310}
v(t,x)\le C_0\log t,~~\hbox{ for $t\gg 1$.}
\end{equation}

The one dimension Fisher-KPP thermo-diffusive system with $g(v)=v$:
\begin{equation}  
\begin{split}
&\partial_t u = \nu \partial_x^2u - uv,\\
&\partial_t v = \kappa \partial_x^2v + uv,
\end{split}
\end{equation}
was studied by Chen and Qin~\cite{CQ2008}, with the initial condition 
\begin{equation}\label{26aug1312} 
u_0(x)\equiv 1.
\end{equation} 
They 
have shown that the solution is uniformly bounded in time:
\begin{equation}\label{26aug1402}
v(t,x)\le C_0,~~\hbox{ for all $t\ge 0$.}
\end{equation} 
Their proof, while quite ingenious, relies heavily on the assumption that $g(v)=v$, as well as on the precise form~\eqref{26aug1312} of $u_0(x)$ and on the asymptotics 
for the location of the front of the solution to the single Fisher-KPP equation
originally proved in~\cite{Bramson1,Bramson2} by Bramson using probabilistic tools. 
This makes it very specific to $g(v)$ that arise from the McKean representation 
of the maximum of a branching
Brownian motion~\cite{McKean}. In particular, they must satisfy
the Fisher-KPP assumption: 
\begin{equation}
0<g(v)<g'(0)v,~~\hbox{for all $v>0$.}
\end{equation}
Most recently, it was proved  in~\cite{LaRoqRyz}, without the Fisher-KPP assumption, 
that the bound~\eqref{26aug1402} holds in any dimension $d\ge 1$ if the nonlinearity $g(v)$
satisfies a weaker condition 
\begin{equation}\label{26aug1316} 
\hbox{$g(v)>0$ for all $v>0$ and $g'(0)>0$,}
\end{equation}
and the initial condition also satisfies a weaker condition than~\eqref{26aug1312}: there exists $K\ge 1$ so that, in addition to~\eqref{main-PDE-initial-conditions}, we have 
\begin{equation}\label{26aug1314} 
u_0(x)+v_0(x)\ge K^{-1},~~\hbox{for all $x\in\Rm^d$.}
\end{equation}
Both of these assumptions have been used in~\cite{LaRoqRyz} to ensure the hair trigger effect: as soon as  
the fuel temperature~$v(t,x)$ becomes ``not too tiny'' in a given region,
it starts growing exponentially fast, which, in turn, causes the fuel concentration to drop exponentially 
fast in time. 
 
An important class of nonlinearities that do not satisfy~\eqref{26aug1316} is the class of ignition
nonlinearities that appear in combustion theory. They have the property that, in addition
to~\eqref{26aug1702}-\eqref{g-liminf-bound}, there exists $\theta>0$ (called the ignition temperature)
so that 
\begin{equation}
g(v)=0,~~\hbox{ for all $0\le v\le\theta$.}
\end{equation}
For such nonlinearities, there is no hair trigger effect even if the initial conditions
were to obey~\eqref{26aug1314}: the fuel temperature has to at least
cross the ignition threshold $\theta$ for the reaction to start, see~\cite{Kanel,Zlatos06}
for a detailed analysis of this phenomenon for a single equation.  The analysis in~\cite{LaRoqRyz}
completely fails  in this situation. 
 
Let us now briefly explain the new ingredients in the proof of Theorem~\ref{thm-26aug902}.  
The main role of the hair trigger effect in~\cite{LaRoqRyz} was to ensure that when reaction starts 
in a given region, the concentration is depleted there very quickly. In the present setting,
this step is accomplished by Lemma~\ref{lem:hot-zones-u-lossbis} introduced in Section~\ref{sec:forward}
below, which replaces the hair trigger effect.  In particular, the decay of the fuel concentration here
requires a large average of the fuel temperature, in contrast to~\cite{LaRoqRyz}. 
The first really new ingredient is 
Lemma~\ref{lem:force-gen-boundbis} in Section~\ref{sec:modMP} that, in a sense, provides a 
non-local version of the Martin-Pierre trick that works
in the case $\kappa\ge \nu$ and bounds an average of the
forcing in the temperature equation by an average of the fuel concentration. The second component 
of the proof that is novel, to the best of our knowledge, is a remarkable cancellation in the Duhamel expansion that is 
repeatedly used in Section~\ref{sec:duhamel}, using a hot-cold decomposition at discrete time steps of a fixed length. 
Without this cancellation, the proof would not have worked. 

This paper is organized as follows. Section~\ref{sec:averages} 
describes the non-local version of the Martin-Pierre trick. We also recall there
why bounding the local averages of the solution is sufficient to obtain a global-in-time
uniform bound. Section~\ref{sec:proof-main-lemma} contains the proof of Lemma~\ref{lem-26aug902},
which is the main step in the proof of Theorem~\ref{thm-26aug902}.  Finally, the
forward-in-time decay Lemma~\ref{lem:hot-zones-u-lossbis} is proved in Section~\ref{sec:aver-lem-proof}.

  
{\bf Acknowledgment.} This work was supported by NSF grants DMS-2205497 and DMS-2510166. 
The contribution of ChatGPT to the proofs was indispensable and most certainly
sufficient to include it as 
a co-author if it were a human. The text of the paper was written without
any AI assistance except for checking for misprints.

\section{A modified Martin-Pierre trick and heat semi-group averages} \label{sec:averages}

\subsection{The original Martin-Pierre trick}\label{sec:martin-pierre}

Let us first recall the beautiful and short trick for proving the boundedness of the solutions to the thermo-diffusive system~\eqref{main-PDE}
when $\nu\ge\kappa$ (the fuel diffuses faster than temperature) from~\cite{MartinPierre}. Let us denote by
\begin{equation}\label{26aug1123}
q(t,x)=u(t,x)g(v(t,x))
\end{equation}
the forcing term in~\eqref{main-PDE} and use the Duhamel formula to represent $u(t,x)$ and $v(t,x)$ as 
\begin{equation}\label{26aug1120}
u(t,x)=[\cS_\nu(t)u_0](x)-\int_0^t [\cS_\nu(t-s)q(s,\cdot)](x)ds,
\end{equation}
and
\begin{equation}\label{26aug1121}
v(t,x)=[\cS_\kappa(t)v_0](x)+\int_0^t [\cS_\kappa(t-s)q(s,\cdot)](x)ds.
\end{equation}
Here and in the rest of the paper, we denote by $\cS_a(t)$ the semi-group associated with the heat equation with a diffusivity $a>0$ on $\Rm^d$:
\begin{equation}\label{26aug1141}
[\cS_a(t)f](x)=\int G_a(t,x-y)f(y)dy,~~G_a(t,x)=\farc{1}{(4\pi at)^{d/2}}e^{-|x|^2/(4at)}.
\end{equation}
A simple but very useful observation is that  
\begin{equation}
a^{d/2}G_a(t,x)\ge b^{d/2}G_b(t,x),~~\hbox{ for all $t>0$ and $x\in\Rm^d$ if $a>b$.}
\end{equation}
In terms of the heat semi-group this translates into the inequality
\begin{equation}\label{26aug1126}
[\cS_b(t)f](x)\le \Big(\farc{a}{b}\Big)^{d/2}[\cS_a(t)f](x),~~\hbox{ for all $t>0$ and $x\in\Rm^d$ if $a>b$,}
\end{equation}
which holds for all non-negative functions $f(x)$.

As the function $q(t,x)$ defined in~\eqref{26aug1123} is non-negative, it follows that if $\nu\ge\kappa$ then we have, using \eqref{26aug1120} and~\eqref{26aug1121}
\begin{equation}\label{26aug1125}
v(t,x)=[\cS_\kappa(t)v_0](x)+\int_0^t [\cS_\kappa(t-s)q(s,\cdot)](x)ds\le K+\farc{\nu^{d/2}}{\kappa^{d/2}}\int_0^t [\cS_\nu(t-s)q(s,\cdot)](x)ds.
\end{equation}
Here, $K$ is the upper bound on the initial conditions in~\eqref{main-PDE-initial-conditions}. 
On the other hand, as $u(t,x)$ is non-negative, we see from \eqref{26aug1120} 
that
\begin{equation}\label{26aug1124}
\int_0^t [\cS_\nu(t-s)q(s,\cdot)](x)ds\le [\cS_\nu(t)u_0](x)\le K.
\end{equation}
We deduce immediately from \eqref{26aug1125} and~\eqref{26aug1124} that $v(t,x)$ obeys a uniform upper bound
\begin{equation}
v(t,x)\le\Big(1+\farc{\nu^{d/2}}{\kappa^{d/2}}\Big)K.
\end{equation}

\subsection{A modified Martin-Pierre trick for the averages}\label{sec:modMP}

Unfortunately, the argument in Section~\ref{sec:martin-pierre} does not apply 
in the case $\nu\le\kappa$ as the inequality~\eqref{26aug1126} does not 
bound the semi-group $\cS_\kappa(t)$ from above in terms of $\cS_\nu(t)$ in that case. 
This can be flipped by introducing an additional
smoothing by the heat semi-group that allows one once again to compare the heat kernels, as follows.  
Let us fix some $\tau>0$ and use the Duhamel formula on a time interval $[t-\tau,t]$, for 
any~$t>\tau$:
\begin{equation}\label{26aug1139}
v(t,x)=[\cS_\kappa(\tau)v(t-\tau,\cdot)](x)+\widetilde W(t,t-\tau,x). 
\end{equation}
Here, we have set 
\begin{equation}\label{26aug1131}
\widetilde W(t_1,t_2,x) = \int_{t_2}^{t_1}[\cS_\kappa(t_1-s)q(s,\cdot)](x)ds,
\end{equation}
with $q(t,x)$ given by~\eqref{26aug1123}. We have the following smoothed version of the Martin-Pierre bound on the forcing term in terms of the fuel concentration. 
\begin{lemma}\label{lem:force-gen-boundbis}
For any $\tau>0$ we have 
\begin{equation}\label{26aug920}
[\cS_\kappa(\tau)\widetilde W(t,t-\tau,\cdot)](x)\le M_{\kappa,\nu}[\cS_\kappa(2\tau)u(t-\tau,\cdot)](x),~~\hbox{for all $t\ge\tau$ and $x\in\Rm^d$,}
\end{equation}
with
\begin{equation}\label{26aug1202}
M_{\kappa,\nu}=\Big(\farc{2\kappa-\nu}{\kappa}\Big)^{d/2}.
\end{equation}
\end{lemma}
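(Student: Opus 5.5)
We write the left side of \eqref{26aug920} explicitly using the semigroup property:
\[
[\cS_\kappa(\tau)\widetilde W(t,t-\tau,\cdot)](x)=\int_{t-\tau}^{t}[\cS_\kappa(t+\tau-s)q(s,\cdot)](x)\,ds .
\]
For $s\in[t-\tau,t]$ the diffusion time $t+\tau-s$ lies in $[\tau,2\tau]$. The plan is to compare the kernel $G_\kappa(t+\tau-s,\cdot)$ pointwise with a kernel of the form $G_\kappa(\sigma,\cdot)*G_\nu(t-s,\cdot)$. Note that $\cS_\kappa(\sigma)\cS_\nu(t-s)=$ convolution with a Gaussian of variance parameter $\kappa\sigma+\nu(t-s)$, i.e.\ $G_1(\kappa\sigma+\nu(t-s),\cdot)$ in the notation of \eqref{26aug1141}. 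Choose $\sigma=\sigma(s)$ so that the total time matches a fixed reference: we want the comparison Gaussian to be independent of $s$, namely $\kappa\sigma+\nu(t-s)=2\kappa\tau-\nu\tau+\nu(t-s)-\ldots$; concretely take $\sigma=2\tau-(t-s)\cdot\nu/\kappa-\ldots$. The cleanest choice is $\kappa\sigma+\nu(t-s)=\kappa(2\tau)-(\kappa-\nu)\cdot\tau$? We instead fix $\sigma\equiv\tau$, giving effective parameter $\kappa\tau+\nu(t-s)$, versus the target parameter $\kappa(t+\tau-s)=\kappa\tau+\kappa(t-s)$. Since $\nu\le\kappa$ here, the target has the larger variance. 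Hence the simple inequality $a^{d/2}G_a\ge b^{d/2}G_b$, stated as Gaussians in variance form, gives
\[
G_\kappa(t+\tau-s,\cdot)\le\Big(\frac{\kappa\tau+\kappa(t-s)}{\kappa\tau+\nu(t-s)}\Big)^{d/2}\big[G_\kappa(\tau,\cdot)*G_\nu(t-s,\cdot)\big]
\]
only in the wrong direction. A large variance is not dominated by a small one. So instead we compare the other way: we want to bound by $\cS_\kappa(2\tau)$ applied to something.

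The correct route is to use the Duhamel formula \eqref{26aug1120} for $u$ on $[t-\tau,t]$. Nonnegativity of $u(t)$ gives, for each $r\in[0,\tau]$,
\[
\int_{t-\tau}^{t-\tau+r}[\cS_\nu(t-\tau+r-s)q(s)]ds\le \cS_\nu(r)u(t-\tau).
\]
Apply $\cS_\kappa(2\tau-r\nu/\kappa)$, which is well defined since $r\nu/\kappa\le\tau\le 2\tau$. Choose $r=\tau$. This yields $\int_{t-\tau}^t \cS_\kappa(2\tau-\tau\nu/\kappa)\cS_\nu(t-s)q(s)\,ds\le \cS_\kappa(2\tau)u(t-\tau)$, using $\cS_\kappa(\alpha)\cS_\nu(\beta)=\cS_\kappa(\alpha+\beta\nu/\kappa)$. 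The left integrand is $\cS_\kappa\big(2\tau-\tfrac{\nu}{\kappa}(\tau-(t-s))\big)q(s)$, with time $T_1(s)=2\tau-\frac{\nu}{\kappa}(\tau-(t-s))$. We must dominate $\cS_\kappa(T_2)q(s)$ with $T_2=\tau+(t-s)$. Apply \eqref{26aug1126}, in its equal-diffusivity version $G_\kappa(T_2)\le (T_1/T_2)^{d/2}G_\kappa(T_1)$, valid when $T_1\ge T_2$. Check $T_1-T_2=(1-\nu/\kappa)(\tau-(t-s))\ge0$. The ratio $T_1/T_2$ is maximized at $s=t$, where it equals $(2\tau-\tau\nu/\kappa)/\tau=(2\kappa-\nu)/\kappa$. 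This gives exactly $M_{\kappa,\nu}$. Integrating in $s$ completes the proof.

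The main obstacle is choosing the smoothing time in the $u$-Duhamel bound so that both the composition identity and the kernel ordering $T_1\ge T_2$ hold uniformly in $s$. Checking that the worst-case ratio at $s=t$ produces precisely $(2\kappa-\nu)/\kappa$ is then a short computation. If $\nu>\kappa$, the original Martin-Pierre argument of Section~\ref{sec:martin-pierre} applies instead, so we assume $\nu\le\kappa$ throughout.
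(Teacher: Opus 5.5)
Your argument is correct and is essentially the paper's proof. The fixed smoothing $\cS_\kappa(2\tau-\tau\nu/\kappa)$ you apply to the Duhamel inequality for $u$ is exactly the paper's $\cS_1(A)$ with $A=(2\kappa-\nu)\tau$. Since $\kappa T_1(s)=A+\nu(t-s)$ and $\kappa T_2(s)=a_s+\nu(t-s)$ with the paper's $a_s=\kappa\tau+(\kappa-\nu)(t-s)$, your comparison of $G_\kappa(T_2(s),\cdot)$ with $G_\kappa(T_1(s),\cdot)$ is the paper's comparison of $\cS_1(a_s)$ with $\cS_1(A)$, composed with $\cS_\nu(t-s)$, and the same worst case $s=t$ produces $M_{\kappa,\nu}$. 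You should delete the exploratory first paragraph, including the displayed inequality you correctly recognize as going the wrong way.
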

{\bf Proof.} 
 Applying the semi-group $\cS_\kappa(\tau)$ to both sides of~\eqref{26aug1131} with $t_1=t$ and $t_2=t-\tau$ gives 
\begin{equation}\label{26aug1112}
[\cS_\kappa(\tau)\widetilde W(t,t-\tau,\cdot)](x)= \int_{t-\tau}^{t}[\cS_\kappa(\tau+t-s)q(s,\cdot)](x)ds.
\end{equation}
Next, for each $s\in[t-\tau,t]$, set
\[
a_s:=\kappa(t+\tau-s)-\nu(t-s)
=\kappa\tau+(\kappa-\nu)(t-s).
\]
This allows us to represent the semi-group on the right side of~\eqref{26aug1112} as 
\begin{equation} \label{shifted-S-k-to-S-nu}
\cS_\kappa(t+\tau-s)=e^{\kappa(t+\tau-s)\Delta}=e^{a_s\Delta}e^{\nu(t-s)\Delta}=
\cS_{1}(a_s)\cS_\nu(t-s),
\end{equation}
so that
\begin{equation}\label{26aug1133}
[\cS_\kappa(\tau)\widetilde W(t,t-\tau,\cdot)](x)= \int_{t-\tau}^{t}[\cS_{1}(a_s)\cS_\nu(t-s)q(s,\cdot)](x)ds.
\end{equation}
Furthermore, since $0<\nu\le\kappa$ and $t-\tau<s<t$, we have 
\begin{equation}\label{a-s-param-bounds}
\kappa\tau\le a_s\le A:=(2\kappa-\nu)\tau.
\end{equation}
Thus,~\eqref{26aug1126} implies that
\begin{equation}\label{26aug1114bis}
\cS_1(a_s)\le \Big(\farc{A}{a_s}\Big)^{d/2}\cS_1(A)\le \Big(\farc{2\kappa-\nu}{\kappa}\Big)^{d/2}\cS_1(A).
\end{equation}
Using this inequality in~\eqref{26aug1133}, we obtain
\begin{equation}\label{26aug1133bis}
[\cS_\kappa(\tau)\widetilde W(t,t-\tau,\cdot)](x)\le \Big(\farc{2\kappa-\nu}{\kappa}\Big)^{d/2}\cS_1(A)\int_{t-\tau}^{t}[\cS_\nu(t-s)q(s,\cdot)](x)ds.
\end{equation}

On the other hand, as in~\eqref{26aug1120}, the Duhamel formula for the fuel concentration $u(t,x)$ on 
the time interval~$[t-\tau,t]$ gives
\begin{equation}
u(t,x)=
[\cS_\nu(\tau)u(t-\tau,\cdot)](x)
-\int_{t-\tau}^{t}[\cS_\nu(t-s)q(s,\cdot)](x)\,ds.
\end{equation}
Since $u(t,x)$ is nonnegative, it follows that
\begin{equation}  \label{force-gain-small}
\int_{t-\tau}^{t}[\cS_\nu(t-s)q(s,\cdot)](x)\,ds
\le [\cS_\nu(\tau)u(t-\tau,\cdot)](x).
\end{equation}
Combining this bound with~\eqref{26aug1133bis} leads to 
\begin{equation}\label{26aug1134}
\bal
[\cS_\kappa(\tau)&\widetilde W(t,t-\tau,\cdot)](x)\le  \Big(\farc{2\kappa-\nu}{\kappa}\Big)^{d/2}[\cS_1(A) \cS_\nu(\tau)u(t-\tau,\cdot)](x)\\
&=
\Big(\farc{2\kappa-\nu}{\kappa}\Big)^{d/2}[\cS_1(A+\nu\tau)u(t-\tau,\cdot)](x)\\
&=\Big(\farc{2\kappa-\nu}{\kappa}\Big)^{d/2}[\cS_1(2\kappa\tau)u(t-\tau,\cdot)](x)=\Big(\farc{2\kappa-\nu}{\kappa}\Big)^{d/2}[\cS_\kappa(2\tau)u(t-\tau,\cdot)](x).
\enbal
\end{equation}
We used the definition \eqref{a-s-param-bounds} of $A$ in the next-to-last identity above. This gives~\eqref{26aug920}.~$\Box$  

\subsection{Bounds on the averages are sufficient} 

Lemma~\ref{lem:force-gen-boundbis} in itself is not as powerful as the original Martin-Pierre trick 
for two reasons.
First, it does not directly bound the forcing term $\widetilde W(t,t-\tau,x)$ in the Duhamel formula~\eqref{26aug1139} but only its average
by the heat semi-group.  Second, it gives no control of the ``initial condition'' term $\cS_\kappa(\tau)v(t-\tau,\cdot)$ since we have no a priori bound on $v(t-\tau,x)$.  

To deal with the former issue, we
recall the following result
of~\cite{HLV,LaRoqRyz}. For any $r>0$, $x\in\Rm^d$, and $t>r^2$ define the parabolic cylinder
\begin{equation}
Q_{r}^-(t,x):=(t-r^2,t)\times B_{r}(x).
\end{equation}
The parabolic BMO semi-norm is defined~by 
\begin{equation*}
\|f \|_{\pbmo} = \sup \Big \{ \frac{1}{|Q|} \int_Q | f - (f)_Q | dxdt  \Big\},
\end{equation*}
where the supremum is taken over all parabolic 
cylinders $Q \subset \Rm_+\times\mathbb{R}^n$.  
The main result of \cite{HLV} and~\cite{LaRoqRyz} is the parabolic BMO bound
on the function $v(t,x)$. 
\begin{proposition}\label{lem-jul2202}
Let $u(t,x)$, $v(t,x)$ be the solution to the system \eqref{main-PDE}  
with the initial conditions~$u(0,x)=u_0(x)$ and~$v(0,x)=v_0(x)$ 
that satisfy~(\ref{main-PDE-initial-conditions}). Assume, in addition, that $g(v)\ge 0$ for
all $v\ge 0$. 
There exists a constant $C_1$ that depends on the constant $K$   in (\ref{main-PDE-initial-conditions}), 
and also on the dimension $n$ and the diffusivities $\kappa>0$ and $\nu>0$ but not on the function
$g(v)$ such that 
\begin{equation}\label{jul2302}
\|v\|_{\pbmo(\Rm_+\times\Rm^d)}\le C_1.
\end{equation}
\end{proposition}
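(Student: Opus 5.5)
Proposition~\ref{lem-jul2202} is the parabolic BMO bound of \cite{HLV,LaRoqRyz}. My plan is to obtain it by splitting $v$ into a smooth, uniformly bounded part plus a forcing part whose $L^1$ mass in every parabolic cylinder is controlled by the fuel, uniformly in $g$. Fix a cylinder $Q=Q_r^-(t_0,x_0)$.

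\textbf{Case $r^2\ge t_0/2$.} The cylinder reaches back to times comparable to $0$. Write $v=\cS_\kappa(t)v_0+W$ with $W(t,x)=\int_0^t[\cS_\kappa(t-s)q(s,\cdot)](x)\,ds$. The first term lies in $[0,K]$, so it is enough to bound $|Q|^{-1}\int_Q W$ by a constant; recall $|Q|\sim r^{d+2}$.

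\textbf{Case $r^2<t_0/2$.} Take $t_1=t_0-2r^2$ and write, for $t\in(t_0-r^2,t_0)$,
\[
v(t,x)=[\cS_\kappa(t-t_1)v(t_1,\cdot)](x)+\widetilde W(t,t_1,x).
\]
The mean oscillation over $Q$ of the first term is controlled as follows. Its gradient is $O(r^{-1})$ times a local average of $|v(t_1)|$, and its time derivative is $O(r^{-2})$ times a similar average. The oscillation of this term is unchanged if we subtract a constant from $v(t_1,\cdot)$, and we can do so as long as that constant is bounded. So after normalising we are led to bound averages of $v(t_1)$ and, in the end, the same type of quantity: the local $L^1$ mass of the forcing.

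\textbf{Key estimate.} For every cylinder $Q$ of radius $r$, and $\widetilde Q$ its doubled backward enlargement,
\[
\int_{\widetilde Q} q\,dx\,dt\le C(K,d,\nu)\,r^{d}.
\]
This should follow by integrating the $u$ equation against a cutoff $\phi$ supported in $B_{2r}$ with $\phi=1$ on $B_r$. One gets
\[
\int\!\!\int q\phi=\int u\phi\Big|_{\mathrm{start}}-\int u\phi\Big|_{\mathrm{end}}+\nu\int\!\!\int u\Delta\phi\le CKr^d+C\nu K r^{-2}\,r^{d}\,r^2\le Cr^d,
\]
using $0\le u\le K$ from \eqref{26jul2616}. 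The constant involves only $u$, which is why the bound does not depend on $g$.

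Feeding this into the Duhamel term $\int\!\!\int G_\kappa(t-s,x-y)q(s,y)\,dy\,ds$ requires care. Split the source region into the local part $\widetilde Q$ and dyadic annuli in space-time. The local part contributes
\[
|Q|^{-1}\int_Q\int_{\widetilde Q}G_\kappa q\le |Q|^{-1}\Big(\sup_y\int_Q G_\kappa\Big)\int_{\widetilde Q}q\le |Q|^{-1}\,Cr^2\cdot Cr^d=O(1).
\]
Far sources produce a contribution that is nearly constant across $Q$. Its oscillation is estimated through gradient and time-derivative bounds on the kernel times the mass bound on the annuli, giving a geometrically summable series. The constant offset drops out in $f-(f)_Q$.

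\textbf{Main obstacle.} The hardest step is the contribution from the distant past, $s\ll t_0-r^2$. Over long time intervals the local masses of $q$ add up, roughly like $r^d\cdot(T/r^2)$, so a naive sum diverges. What saves it is that only oscillation matters: the time derivative of the kernel at lag $T$ and distance about $\sqrt T$ is of order $T^{-1-d/2}$, so the oscillation across $Q$ is about $r^2T^{-1-d/2}$ times the mass in a region of size $\sqrt T$, namely $T^{d/2}\cdot(T/T)$. Grouping the past dyadically in $T=2^kr^2$, each block gives $O(r^2/T)$, which sums. I would carry out this bookkeeping using the mass bound at scale $\sqrt T$. This handles both cases uniformly and gives $C_1$ depending only on $K,d,\kappa,\nu$.
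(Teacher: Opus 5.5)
Your argument is correct, but it takes a different route from the one the paper relies on.

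\textbf{The paper's route.} Citing \cite{HLV,LaRoqRyz}, the paper uses a singular-integral argument. The function $r:=\cS_\nu(t)u_0-u$ satisfies $|r|\le K$ and $r_t=\nu\Delta r+q$, while $W_t=\kappa\Delta W+q$. Hence $W=r+(\kappa-\nu)(\partial_t-\kappa\Delta)^{-1}\Delta r$. The $L^\infty\to$ p-BMO boundedness of the parabolic Calder\'on--Zygmund operator $(\partial_t-\kappa\Delta)^{-1}\Delta$ then gives the bound.

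\textbf{Your route.} You keep only the localized Martin--Pierre mass bound $\int_{Q_r}q\le CKr^d$, which is growth of codimension two in the parabolic dimension $d+2$. You then show directly, with Gaussian kernel estimates, that the caloric potential of a nonnegative measure with this growth lies in p-BMO. This is a parabolic analogue of the endpoint Adams--Morrey estimate for Riesz potentials. The bookkeeping closes:
\begin{itemize}
\item The local part is nonnegative with mean $O(1)$, since $\int_Q G_\kappa(t-s,x-y)\,dx\,dt\le r^2$.
\item Recent sources in the annulus $2^kr\le|y-x_0|<2^{k+1}r$, $k\ge1$, carry mass $\lesssim 2^{kd}r^d$ against a kernel $\lesssim r^{-d}e^{-c4^k}$ on $Q$. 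So they are bounded pointwise.
\item A past block with lag $T\sim 2^jr^2$ has oscillation $O(r/\sqrt T+r^2/T)$ on $Q$. The spatial gradient gives the larger term $r/\sqrt T$, not $r^2/T$ as you wrote, but it still sums geometrically.
\end{itemize}

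\textbf{Comparison.} Your route is elementary and shows clearly why one gets BMO rather than $L^\infty$. The distant-past contribution at $(t_0,x_0)$ can be of size $\log(t_0/r^2)$ and cancels only in the oscillation. It does, however, use $q\ge 0$ essentially. The Calder\'on--Zygmund route needs only $\|r\|_{L^\infty}\le K$ and no sign on the forcing, at the cost of invoking singular-integral theory.

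\textbf{One piece to discard.} In your second case, controlling the oscillation of $\cS_\kappa(t-t_1)v(t_1,\cdot)$ requires the Gaussian-weighted mean oscillation of $v(t_1,\cdot)$ at scale $r$. That is the statement being proved, so as written the step is circular. It is also unnecessary. For every cylinder, write $v=\cS_\kappa(t)v_0+W$ with $0\le \cS_\kappa(t)v_0\le K$. Subtract the constant $c_Q=W_{\rm past}(t_0,x_0)$, which is finite because $q=0$ for $s<0$. Your local/far/past decomposition then treats all cylinders at once, as your last paragraph already suggests.
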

The above bound does not depend on the nature of the nonlinearity $g(v)$. It says, essentially,
that if two functions $p$ and $r$ satisfy a pair of forced heat equations
\begin{equation}
\bal
&p_t=\kappa\Delta p+q(t,x),~~t>0,~x\in\Rm^d,\\
&r_t=\nu\Delta r+q(t,x),~~t>0,~x\in\Rm^d,
\\
\enbal
\end{equation}
with the same force $q(t,x)$, and $r(t,x)$ is uniformly bounded in $L^\infty(\Rm_+\times\Rm^d)$,
then $p(t,x)$ is bounded in the parabolic BMO norm. This comes from the fact that $p(t,x)$ 
and $r(t,x)$ are essentially related by a space-time singular integral operator. The details
of this argument can be found in~\cite{LaRoqRyz}. 

Let us also recall the John-Nirenberg inequality.
\begin{proposition}\label{lem-john-nir} (John--Nirenberg inequality) 
There exist constants $A, B>0$, such that for all parabolic cylinders~$Q=Q_R(t_0,x_0)$ and all~$\lambda>0$, we have
\begin{equation}\label{jul2536}
\big|\{ (t,x) \in Q:~|p(t,x) - (p)_Q | \ge \lambda \} | \le B \exp \Big( -\frac{A \lambda}{\|p\|_{\pbmo} }\Big) |Q|,
\end{equation}
for all $p\in\pbmo$. Here, $|Q|$ is the $(n+1)$-dimensional Lebesgue measure of $Q$.
\end{proposition}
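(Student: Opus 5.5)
The plan is to prove the John--Nirenberg inequality by a Calder\'on--Zygmund decomposition adapted to the parabolic geometry, followed by the classical iteration. By homogeneity, replacing $p$ by $p/\|p\|_{\pbmo}$ (the case $\|p\|_{\pbmo}=0$ being trivial), we may assume $\|p\|_{\pbmo}=1$. We then need $|\{(t,x)\in Q:|p-(p)_Q|\ge\lambda\}|\le Be^{-A\lambda}|Q|$.

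The first step is to replace balls by parabolic ``cubes'' $I\times P$, where $P$ is a spatial cube of side $r$ and $I$ is a time interval of length $r^2$. Each cylinder $Q_r^-(t,x)$ is contained in such a box of comparable measure, and conversely. Hence the BMO seminorm computed over boxes is equivalent to the one over cylinders, up to a dimensional constant. For $Q$ itself, we work in an enclosing box $\tilde Q$ with $|\tilde Q|\le C_d|Q|$; the mean changes by at most $C_d$, since $|(p)_Q-(p)_{\tilde Q}|\le \frac{1}{|Q|}\int_{\tilde Q}|p-(p)_{\tilde Q}|\le C_d$. This shift is absorbed into $B$. One caveat: if the box pokes below $t=0$, we shift it to lie inside $Q$'s time range. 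Any box contained in $\Rm_+\times\Rm^d$ is admissible, since the paper's supremum is over all cylinders in $\Rm_+\times\Rm^d$.

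The second step is the parabolic dyadic subdivision. A box of side $r$ is split into $2^d$ spatial halves times $4$ time quarters, giving $2^{d+2}$ children, each a box of side $r/2$ and time length $r^2/4$. This keeps the family of boxes invariant, and each child has measure $2^{-(d+2)}$ times its parent. On $\tilde Q$, apply the stopping-time construction to $f=p-(p)_{\tilde Q}$ at level $\alpha=2$ (any fixed $\alpha>1$ works). Select the maximal dyadic sub-boxes $Q_j$ on which $\frac1{|Q_j|}\int_{Q_j}|f|>\alpha$. The standard properties then follow:
\begin{itemize}
\item $\alpha<(|f|)_{Q_j}\le 2^{d+2}\alpha$, by maximality, since the parent does not exceed $\alpha$;
\item $|f|\le\alpha$ a.e.\ off $\bigcup Q_j$, by the Lebesgue differentiation theorem for the parabolic dyadic filtration;
\item $\sum|Q_j|\le \alpha^{-1}\int_{\tilde Q}|f|\le \alpha^{-1}|\tilde Q|$;
\item $|(p)_{Q_j}-(p)_{\tilde Q}|\le 2^{d+2}\alpha$.
\end{itemize}

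The third step is the iteration. Define $F(\lambda)=\sup |\{|p-(p)_{Q'}|>\lambda\}\cap Q'|/|Q'|$, where the supremum is over all boxes $Q'$ and all $p$ with seminorm at most $1$. This quantity is finite (at most $1$). For $\lambda>2^{d+2}\alpha$, the set $\{|f|>\lambda\}$ lies, up to a null set, in $\bigcup Q_j$. On each $Q_j$ we have $|p-(p)_{Q_j}|>\lambda-2^{d+2}\alpha$ there, so
\[
F(\lambda)\le \alpha^{-1}F(\lambda-2^{d+2}\alpha).
\]
Iterating $k=\lfloor\lambda/(2^{d+2}\alpha)\rfloor$ times gives $F(\lambda)\le 2^{-k}\le 2e^{-A\lambda}$ with $A=\log 2/(2^{d+3})$. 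Combined with the first step, this yields the claim with suitable $A,B$ depending only on $d$.

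The main obstacle is the geometric bookkeeping in the first step, not the iteration. We must check that the paper's class of cylinders $Q_r^-$ (balls, backward in time, inside $\Rm_+\times\Rm^d$) and the dyadic parabolic boxes control each other's averages. The dyadic children must be admissible sets for the BMO bound, which holds because every box is covered by a cylinder of comparable size lying in $\Rm_+\times\Rm^d$ when the box itself does. That last point requires choosing the enclosing cylinder's top time equal to the box's top time, which is always possible.
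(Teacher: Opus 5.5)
The paper does not prove this proposition. It recalls the parabolic John--Nirenberg inequality as a known fact, so the benchmark is the classical argument. Your Calder\'on--Zygmund decomposition on parabolic dyadic boxes, followed by the iteration $F(\lambda)\le\alpha^{-1}F(\lambda-2^{d+2}\alpha)$, is that argument, and the stopping-time properties and the iteration itself are fine.

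One bookkeeping point: ``any fixed $\alpha>1$'' is correct only if you normalize the \emph{box} seminorm to $1$. If you normalize the cylinder seminorm instead, the box seminorm is only bounded by some $C_d$. Then the top box may be selected and the recursion factor becomes $C_d/\alpha$, so you need $\alpha>C_d$.

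The real problem is in the geometric step you yourself single out: you shift in the wrong time direction.

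\begin{itemize}
\item Your box convention is spatial side $s$, time length $s^2$. A box containing $Q_R^-(t_0,x_0)$ needs $s\ge 2R$, hence time length at least $4R^2$. So it cannot be shifted to lie inside $Q$'s time range $(t_0-R^2,t_0)$; it has to stick out of that range.
\item To cover a box $(a,a+s^2)\times P_s(y)$ by a cylinder you need radius $r\ge\max(1,\sqrt d/2)\,s$. For $d\ge 5$ this gives time length $ds^2/4>s^2$. Matching top times then puts the cylinder's bottom at $a-(d/4-1)s^2$, which is negative for the dyadic children in the bottom layer of $\tilde Q$ when $t_0$ is close to $R^2$. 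So the claim that matching top times ``is always possible'' is false for $d\ge5$ (for $d\le4$ it does work, since $r=s$ suffices).
\end{itemize}

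The repair is one line. The only constraint is $t\ge0$, so fix the bottom time and extend upward:
\begin{itemize}
\item Take $\tilde Q=(t_0-R^2,\,t_0+3R^2)\times P_{2R}(x_0)$. It contains $Q$, lies in $\Rm_+\times\Rm^d$ because $t_0\ge R^2$, and satisfies $|\tilde Q|\le C_d|Q|$.
\item Cover each box $(a,a+s^2)\times P_s(y)$ by the cylinder $Q^-_r(a+r^2,y)=(a,a+r^2)\times B_r(y)$ with $r=\max(1,\sqrt d/2)\,s$. It is admissible since $a\ge0$, and it has comparable measure.
\end{itemize}
With these choices your comparison of means, the Calder\'on--Zygmund decomposition and the iteration go through unchanged. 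They give the inequality with $A,B$ depending only on $d$.
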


As a consequence of the parabolic BMO bound~\eqref{jul2302} and the John-Nirenberg inequality, it was 
shown in~\cite{LaRoqRyz}, under the present assumptions on $g(v)$, that, 
in order to prove the uniform bound~\eqref{26jul2604} on $v(t,x)$ it suffices to bound the
averages of $v(t,x)$ over parabolic cylinders. More precisely, we have the following. 

\begin{proposition}[Corollary 3.6 of~\cite{LaRoqRyz}]\label{prop-26jul2602}
For any $\eps>0$ and $R>0$ there exists $C_\eps$ such that for any~$t_0\ge 2R^2$ 
and $x_0\in\Rm^d$ we have 
\begin{equation}\label{26jul2620}
\sup_{Q^{{ {-}}}_R(t_0,x_0)}v(t,x)\le C_\eps(v)_{Q_{2R}^-(t_0,x_0)}^{1-\eps}
\exp\big(C_{\eps} (v)_{Q_{2R}^-(t_0,x_0)} \big). 
\end{equation}
\end{proposition}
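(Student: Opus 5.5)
We derive this statement from Proposition~\ref{lem-jul2202} and the John--Nirenberg inequality of Proposition~\ref{lem-john-nir}, reducing it to a local bound on $v$ near $(t_0,x_0)$ by its average over the larger cylinder $Q_{2R}^-(t_0,x_0)$.

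\emph{Step 1: reduce to an $L^p$ bound.} Fix $t_0\ge 2R^2$ and $x_0$. Set $Q'=Q_{2R}^-(t_0,x_0)$ and $m=(v)_{Q'}$. By Proposition~\ref{lem-jul2202} we have $\|v\|_{\pbmo}\le C_1$. The John--Nirenberg inequality then gives, for every $p<\infty$,
\[
\frac{1}{|Q'|}\int_{Q'} e^{\beta|v-m|}\,dx\,dt\le C,\qquad \beta=\frac{A}{2C_1}.
\]
Hence $\frac{1}{|Q'|}\int_{Q'} e^{\beta v}\le C e^{\beta m}$, and also $\|v\|_{L^p(Q')}\lesssim_p m+C_1$.

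\emph{Step 2: local sup bound for the temperature equation.} The function $v$ is a subsolution-type solution of $v_t-\kappa\Delta v=u g(v)$. Here $0\le u\le K$ by \eqref{26jul2616}, and $g(v)\le c_2e^{Zv^\rho}$ with $\rho<1$. Since $\rho<1$, for any $\delta>0$ we get $e^{Zv^\rho}\le C_\delta e^{\delta v}$. So the forcing $f=ug(v)$ satisfies
\[
\|f\|_{L^p(Q')}^p\le C\int_{Q'}e^{p\delta v}\le C\,e^{p\delta m}\,|Q'|,
\]
once $p\delta\le\beta$. Take a cutoff $\chi$ equal to $1$ on $Q_R^-$ and supported in $Q'$. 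We apply the Duhamel formula to $\chi v$, or equivalently the standard parabolic local maximum estimate (Moser iteration, or heat-kernel $L^p\to L^\infty$ bounds with $p>(d+2)/2$):
\[
\sup_{Q_R^-}v\le C\Big(\frac{1}{|Q'|}\int_{Q'}v\Big)+C\|f\|_{L^p(Q')}\le C m+C_\delta e^{\delta m}.
\]
This is essentially of the form \eqref{26jul2620}. To get the precise prefactor $m^{1-\eps}$, we write $v\le (m+|v-m|)$ and use the Moser form $\sup v\lesssim (\fint v^{s})^{1/s}$ with small $s$, or interpolate. Alternatively, we simply note that $Cm+C e^{\delta m}\le C_\eps m^{1-\eps}e^{C_\eps m}$ when $m\ge1$. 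For $m$ small we need care, since the right side of \eqref{26jul2620} vanishes as $m\to0$.

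\emph{Step 3 (main obstacle): small averages.} When $m\ll1$ the forcing must also be small. We use $g(0)=0$ and $g\in C^1$, which give $g(v)\le L v\, e^{\delta v}$. Then $\|f\|_{L^p}\le C\|v e^{\delta v}\|_{L^p}$. By Hölder and John--Nirenberg this is $\lesssim \|v\|_{L^{2p}(Q')}$. The issue is that $\|v\|_{L^{2p}}$ is controlled only by $m+C_1$, not by $m$. To recover a power of $m$ we interpolate: $\|v\|_{L^{2p}}\le \|v\|_{L^1}^{\theta}\|v\|_{L^{N}}^{1-\theta}$, with $N$ large and $\theta$ close to $1$, giving $\lesssim m^{1-\eps}$. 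Here $\|v\|_{L^N}\le C_N(m+C_1)$ by Step~1. This gives $\sup_{Q_R^-}v\le C_\eps m^{1-\eps}e^{C_\eps m}$, which explains the exponent $1-\eps$. The delicate point is making the interpolation exponents uniform; we would choose $N$ depending on $\eps$ and absorb all constants into $C_\eps$.
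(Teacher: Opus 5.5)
Your Steps~1--2 are fine. They give $\sup_{Q_R^-}v\le Cm+C_\delta e^{\delta m}$, which settles the case $m\ge1$. The gap is Step~3, which is exactly the regime where the right side of \eqref{26jul2620} tends to zero.

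The interpolation there cannot have $\theta$ close to $1$. The relation $\frac{1}{2p}=\theta+\frac{1-\theta}{N}$ forces $\theta=\frac{1/(2p)-1/N}{1-1/N}<\frac{1}{2p}<\frac{1}{d+2}$, because you need $p>(d+2)/2$ for the $L^p\to L^\infty$ bound. So Step~3 yields at best $\sup_{Q_R^-}v\lesssim m^{1/(2p)}$, not $m^{1-\eps}$.

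No better choice of exponents fixes this, because the information you use (the mean and John--Nirenberg integrability) cannot control $\|v\|_{L^{2p}(Q')}$ by $m^{1-\eps}$. Take $\one_E$ with $|E\cap Q'|=\mu|Q'|$. It has $\pbmo$ seminorm at most $1/2$, mean $\mu$ over $Q'$, and all normalized $L^N(Q')$ norms at most $1$. Yet its normalized $L^{2p}(Q')$ norm is $\mu^{1/(2p)}$. Treating $ug(v)$ as an $L^p$ source with $p>(d+2)/2$ therefore inherently loses the power of $m$.

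The missing idea is that the reaction is linear in $v$ near $v=0$, so it should be treated as a potential rather than a source. One way to do this for $m\le1$:
\begin{enumerate}
\item Your Step~2, applied on $Q_{3R/2}^-(t_0,x_0)\subset Q'$, gives $\sup_{Q_{3R/2}^-}v\le C_*$ with $C_*$ independent of $m$.
\item Set $\Lambda=\sup_{0<s\le C_*}g(s)/s$, which is finite since $g\in C^1$ and $g(0)=0$.
\item Then $v_t-\kappa\Delta v=ug(v)\le K\Lambda v$ on $Q_{3R/2}^-$, so $e^{-K\Lambda t}v$ is a nonnegative subsolution of the heat equation there.
\item The local maximum (mean-value) inequality gives $\sup_{Q_R^-}v\le C_R\,(v)_{Q_{3R/2}^-}\le C_R'\,m\le C_R'\,m^{1-\eps}$.
\end{enumerate}
Equivalently, you can run the Moser iteration for $v_t-\kappa\Delta v\le c\,v$ with $0\le c=ug(v)/v\le KL_\delta e^{\delta v}$. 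By Step~1, $c$ lies in $L^p(Q')$ uniformly for $m\le 1$.

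With Step~3 replaced by this, your outline ($\pbmo$ bound, John--Nirenberg, local parabolic estimates) proves the proposition. This matches how the paper says the result is obtained in~\cite{LaRoqRyz}; the paper itself only cites it.
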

Thus, to obtain a uniform bound on $v(t,x)$ it suffices to bound its averages over parabolic cylinders. 


It is straightforward to replace the parabolic cylinder averages by the heat semigroup averages that appear
in Lemma~\ref{lem:force-gen-boundbis}. 
Note that for any $\tau>0$ fixed, there exists $m_\tau>0$ so that the heat kernel $G_\kappa(t,x)$ defined by~\eqref{26aug1141}
satisfies a lower bound 
\begin{equation}
G_\kappa(\tau,x)\ge m_\tau,~~\hbox{ for all $x\in B_1(0)$.}
\end{equation}
It follows that for any non-negative function $p(x)$ we have
\begin{equation}
[\cS_\kappa(\tau)p](x)=\int G_\kappa(\tau,x-y)p(y)dy\ge m_\tau\int_{B_1(x)}p(y)dy.
\end{equation}
As a consequence, given a function $z(t,x)$, for any $\tau>0$ we have an upper bound
\begin{equation}
\bal
|B_1(0)|(z)_{Q_1^-(t, x)} =\int_{t-1}^t\int_{B_1(x)}z(s,y)dy\le \farc{1}{m_\tau}\int_{t-1}^t [\cS_\kappa(\tau)z(s,\cdot)](x)ds\le 
\farc{1}{m_\tau}\sup_{s\ge 0}\|\cS_\kappa(\tau)z(s,\cdot)\|_{L^\infty(\Rm^d)}.
\enbal
\end{equation}
Thus, the conclusion of Theorem~\ref{thm-26aug902} would follow from Proposition~\ref{prop-26jul2602}
and the following lemma.
\begin{lemma}\label{lem-26aug902}
There exist $\tau>0$ and $C_\tau>0$ so that  
\begin{equation}\label{26aug912}
[\cS_\kappa(\tau)v(t,\cdot)](x)\le C_\tau,~~\hbox{ for all $t>0$ and $x\in\Rm^d$.}
\end{equation}
\end{lemma}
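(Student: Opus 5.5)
We plan to prove Lemma~\ref{lem-26aug902} by induction over discrete time steps of a fixed length $\tau$, combining the Duhamel formula~\eqref{26aug1139} with Lemma~\ref{lem:force-gen-boundbis}. Write $V_n(x)=[\cS_\kappa(\tau)v(n\tau,\cdot)](x)$. Applying $\cS_\kappa(\tau)$ to~\eqref{26aug1139} at $t=(n+1)\tau$ gives
\begin{equation*}
V_{n+1}(x)=[\cS_\kappa(2\tau)v(n\tau,\cdot)](x)+[\cS_\kappa(\tau)\tW((n+1)\tau,n\tau,\cdot)](x).
\end{equation*}
By Lemma~\ref{lem:force-gen-boundbis}, the second term is at most $M_{\kappa,\nu}[\cS_\kappa(2\tau)u(n\tau,\cdot)](x)\le M_{\kappa,\nu}K$. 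Used naively, this only gives $V_{n+1}\le \cS_\kappa(\tau)V_n+M_{\kappa,\nu}K$, which grows linearly in $n$. The point is therefore to show that the forcing term $\cS_\kappa(2\tau)u(n\tau)$ is small wherever the temperature average is large, so that heating cannot accumulate indefinitely at any location.

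To do this, we will perform a hot--cold decomposition at each step. Fix a large threshold $\Lambda$ and call a point $x$ \emph{hot} at step $n$ if $V_n(x)\ge\Lambda$, and \emph{cold} otherwise. At cold points we already have the bound we want. At hot points we plan to use the forward decay lemma referred to in the introduction (Lemma~\ref{lem:hot-zones-u-lossbis}): a large average of $v$ forces $g(v)$ to be bounded below on a substantial set, by~\eqref{g-liminf-bound} together with the BMO/John--Nirenberg control from Proposition~\ref{lem-jul2202} and Proposition~\ref{lem-john-nir}. The fuel is then consumed, and $\cS_\kappa(2\tau)u$ near $x$ drops below a small $\delta$ by the next step.

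We would then unfold the recursion backwards via Duhamel:
\begin{equation*}
V_{n}\le \cS_\kappa(n\tau)V_0+\sum_{k<n}\cS_\kappa((n-1-k)\tau)F_k,\qquad F_k=\cS_\kappa(\tau)\tW((k+1)\tau,k\tau,\cdot).
\end{equation*}
Splitting each $F_k$ into hot and cold parts, the hot parts contribute at most $\delta M_{\kappa,\nu}$ per step, which is still not summable. The key must therefore be a cancellation: the total forcing $\sum_k F_k$ propagated by $\cS_\kappa$ is controlled by the total fuel lost. Precisely, summing~\eqref{force-gain-small} over steps telescopes through the $u$-Duhamel formula, giving
\begin{equation*}
\sum_k \cS_\nu((n-1-k)\tau)\int_{k\tau}^{(k+1)\tau}\cS_\nu((k+1)\tau-s)q(s)\,ds\le \cS_\nu(n\tau)u_0\le K.
\end{equation*}
We would try to redo the comparison~\eqref{shifted-S-k-to-S-nu} for the long-time semigroups. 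This fails for large gaps because $\kappa>\nu$. So only the last $N$ steps (with $N$ fixed) would be handled by the comparison, giving a constant $C(N)K$. The older contributions would be handled by the hot--cold argument: once a region becomes hot, its fuel is depleted, so subsequent forcing there is small, while forcing from cold regions is bounded by $\Lambda$-dependent constants propagated by the contraction $\cS_\kappa$.

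The main obstacle is making this cancellation rigorous: showing that the contributions to $V_n(x)$ from times long ago do not pile up. We would first attempt an inductive claim of the form: if $\sup_x V_k\le C_*$ for all $k\le n$, then $\sup_x V_{n+1}\le C_*$. Here $C_*$ is chosen so that whenever $V_{n+1}(x)$ would exceed $C_*$, the point $x$ was hot at the previous steps. The decay lemma then makes the recent forcing at most $\delta$, which is dominated by the slight dissipation of $\cS_\kappa(2\tau)v(n\tau)$ relative to $V_n$. Extracting that dissipation, presumably again from the BMO bound and the spreading of mass, is where we expect the real difficulty to lie.
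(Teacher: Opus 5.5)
There is a genuine gap, and it is the one you flag at the end: nothing in the proposal prevents the $O(1)$ forcing per step from accumulating, and the mechanisms you suggest cannot supply the missing control.

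\emph{The sup-norm induction fails.} The inductive sup-norm step would not close. $\cS_\kappa(\tau)$ has norm exactly one on $L^\infty$ and no quantitative dissipation. If $V_n\approx C_*$ on a large hot region, then $\cS_\kappa(\tau)V_n\approx C_*$ at its center, and any forcing $\delta>0$ pushes $V_{n+1}$ above $C_*$. Neither the BMO bound nor spreading of mass helps there.

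\emph{Unfolding the whole formula loses the cold information.} In $V_n\le\cS_\kappa(n\tau)V_0+\sum_k\cS_\kappa((n-1-k)\tau)F_k$, coldness bounds the accumulated temperature $V_k$, not the forcing $F_k$. Splitting $F_k$ into hot and cold parts therefore still leaves a contribution of order one per step.

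\emph{The fuel-loss lemma is misused.} Lemma~\ref{lem:hot-zones-u-lossbis} does not make $\cS_\kappa(2\tau)u$ small near a hot point, since that average also sees non-hot neighbors. What it gives is the pointwise and \emph{multiplicative} bound $u(t,x)\le\eta[\cS_\kappa(\tau)u(t-\tau,\cdot)](x)$ for hot $x$. Converting this into an additive smallness $\delta$ per step throws away exactly the structure that makes the sum finite.

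The idea you are missing is to do the hot--cold split on the propagated quantity at every backward step, not on the forcing. The paper works with $V(t)=\cS_\kappa(\tau)W(t-\tau)$, which satisfies $V(t)=\cS_\kappa(\tau)V(t-\tau)+\cS_\kappa(\tau)\tW(t-\tau,t-2\tau)$.

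\emph{Choice of hot set.} It defines the hot set $U_s$ through parabolic-cylinder averages $(v)_{Q^-_\ell(s,x)}>Z_\eta$. Then John--Nirenberg gives the hypothesis of Lemma~\ref{lem:hot-zones-u-lossbis} on $U_s$, and Proposition~\ref{prop-26jul2602} gives $v\le M_\eta$ off $U_s$. Your single-time threshold $V_n\ge\Lambda$ does not directly yield the cold-fraction hypothesis on a backward cylinder.

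\emph{Cold contributions telescope.} Set $\cT^t_{n+1}=\cT^t_n\cS_\kappa(\tau)\one_{U_{t-(n+1)\tau}}$ and $\cF^t_{n+1}=\cT^t_n\cS_\kappa(\tau)\one_{U^c_{t-(n+1)\tau}}$. The expansion \eqref{26aug931} then stops each backward path the first time it is cold, where $V\le v\le M_\eta$. The stopped mass telescopes, $\cF^t_n[1]=\cT^t_{n-1}[1]-\cT^t_n[1]$, so all cold contributions together are at most $M_\eta$, rather than $M_\eta$ per step.

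\emph{Hot forcing decays geometrically.} The forcing is propagated only by $\cT^t_n$. After Lemma~\ref{lem:force-gen-boundbis}, one writes $\cT^t_n\cS_\kappa(\tau)\cS_\kappa(\tau)=\cT^t_{n+2}+\cF^t_{n+2}+\cF^t_{n+1}\cS_\kappa(\tau)$, and the $\cF$ pieces telescope again. For $B_1=\sum_n\cT^t_n u(t-n\tau)$, the fuel loss on the hot set gives the self-referential bound $B_1\le\eta B_1+(1+\eta)K$. Fuel along paths that stay hot decays geometrically, so the total forcing is bounded independently of the number of steps.

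This is the cancellation your proposal is searching for. It also makes your long-time comparison of $\cS_\kappa$ with $\cS_\nu$ unnecessary, since Lemma~\ref{lem:force-gen-boundbis} is only ever used over a single step.
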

The rest of the paper contains the proof of Lemma~\ref{lem-26aug902}.
Before proceeding with that proof, let us make the following comment. We can represent $v(t,x)$ as
\begin{equation}\label{26aug1142}
v(t,x)=[\cS_\kappa(t)v_0](x)+W(t,x).
\end{equation}
Here, we have set
\begin{equation}\label{26aug910}
W(t,x)=\int_0^t [\cS_\kappa(t-s)q(s,\cdot)](x)ds.
\end{equation}
The term  
\begin{equation}
\bar v(t,x)=\cS_\kappa(t)v_0(x),
\end{equation}
on the right side of~\eqref{26aug1142} is a solution to the heat equation and satisfies the trivial upper bound
\begin{equation}\label{26aug906}
\bar v(t,x)\le K,
\end{equation}
that follows from the assumption~\eqref{main-PDE-initial-conditions} on the initial condition $v_0$. 
Therefore, the conclusion of Lemma~\ref{lem-26aug902} would follow from the a priori bound
\begin{equation}\label{26aug908}
[\cS_\kappa(\tau)W(t,\cdot)](x)\le C_\tau,~~\hbox{ for all $t>0$ and $x\in\Rm^d$,}
\end{equation}
and this is what we will prove. 


\section{The proof of Lemma~\ref{lem-26aug902}} \label{sec:proof-main-lemma} 

\subsection{A forward-in-time fuel decay estimate}\label{sec:forward}

Lemma~\ref{lem:force-gen-boundbis} provides some sort of a bound on the forcing term in the equation for the temperature~$v(t,x)$
in terms of the fuel concentration $u(t,x)$. In order to be able to close the argument, we 
need a decay bound on~$u(t,x)$ in the regions where~$v(t,x)$ is sufficiently large. This is provided by the following lemma. 
Let us fix $\vartheta>0$ such that  
\begin{equation}\label{eq::vatheta-defbis}
   h_\vartheta:= \inf_{v > \vartheta}g(v)> 0. 
\end{equation}
\begin{lemma}[Fuel loss in a predominantly ignited cylinder]
\label{lem:hot-zones-u-lossbis} 
Let $u(t,x)$, $v(t,x)$ be non-negative  solutions to the system~(\ref{main-PDE}) with $0 < \nu<\kappa$ and $g (v)\ge 0$ satisfying (\ref{g-liminf-bound}). Then,  
for any~$\eta\in(0,1)$ and any~$\vartheta>0$ that satisfies (\ref{eq::vatheta-defbis}),  there exist $\tau_\eta > 0$ and
$\varepsilon_\eta>0$ such that if, for some $t \ge \tau_\eta$ and~$x\in\Rm^d$ we have
\begin{equation}
 \frac{\bigl|\{v<\vartheta\}\cap Q^-_{\ell_\eta}(t,x)\bigr|}
      {\tau_\eta|B_{\ell_\eta}(0)|}
 \le\varepsilon_\eta,
 \label{eq:cold-fraction}
\end{equation}
with $\ell_\eta=\sqrt{\tau_\eta}$, then 
\begin{equation}  \label{eq:fuel-loss}
 u(t,x)\le\eta [\cS_\kappa(\tau_\eta)u(t-\tau_\eta,\cdot)](x).
\end{equation}
\end{lemma}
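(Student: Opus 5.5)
The plan is to write $u$ by a Feynman--Kac (Brownian bridge) representation over the interval $[t-\tau,t]$, with $\tau=\tau_\eta$ to be chosen. The absorption rate $g(v)$ is at least $h_\vartheta$ on the hot set $\{v\ge\vartheta\}$; strictly, \eqref{eq::vatheta-defbis} gives this on $\{v>\vartheta\}$, and I will slightly lower $\vartheta$ or treat the boundary set as cold. I will then compare the $\nu$-kernel with the $\kappa$-kernel using $\nu<\kappa$.

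\textbf{Step 1: bridge representation.} Since $u_t=\nu\Delta u-g(v)u$ with $g(v)\ge0$, we have
\[
u(t,x)=\int G_\nu(\tau,x-y)\,u(t-\tau,y)\,\Phi(y)\,dy,\qquad
\Phi(y)=\mathbb E\Big[e^{-\int_0^\tau g(v(t-s,X_s))ds}\Big],
\]
where $X$ is a Brownian bridge of diffusivity $\nu$ from $X_0=x$ to $X_\tau=y$. It therefore suffices to show
\[
\psi(y):=\frac{G_\nu(\tau,x-y)}{G_\kappa(\tau,x-y)}\,\Phi(y)\le\eta\quad\hbox{for all $y$.}
\]
The explicit kernels give
\[
\frac{G_\nu}{G_\kappa}(\tau,z)=\Big(\frac{\kappa}{\nu}\Big)^{d/2}e^{-|z|^2(\nu^{-1}-\kappa^{-1})/(4\tau)}.
\]
This ratio is at most $(\kappa/\nu)^{d/2}$ everywhere and at most $\eta$ when $|z|\ge L\sqrt\tau$, for some $L=L(\eta,\nu,\kappa,d)$. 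Far endpoints are thus handled using only $\Phi\le1$. It remains to show $\Phi(y)\le\eta(\nu/\kappa)^{d/2}$ for $|y-x|\le L\sqrt\tau$.

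\textbf{Step 2: near endpoints, in rescaled variables.} Rescale time by $\tau$ and space by $\sqrt\tau$. The bridge then runs over unit time, from $x$ to a point within distance $L$, and the cylinder $Q^-_{\ell_\eta}(t,x)$ becomes the unit cylinder. Fix a small $\alpha\in(0,1/4)$ and look only at bridge times $s\in[\alpha,2\alpha]$, i.e.\ real times just before $t$.

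First, I choose $\alpha=\alpha(\eta,L)$ small enough that, uniformly over endpoints with $|y-x|\le L$, the bridge stays in $B_1(x)$ during $[0,2\alpha]$ except on an event of probability at most $\eta_1:=\tfrac13\eta(\nu/\kappa)^{d/2}$. This holds because the bridge drift over a time $2\alpha$ is only $O(L\alpha)$.

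Next, for $s\in[\alpha,2\alpha]$ the bridge marginal density is
\[
G(s,z-x)\,G(1-s,y-z)/G(1,y-x)\le C(\alpha,L).
\]
Hence the expected rescaled occupation time of the cold set $\{v<\vartheta\}\cap Q^-$ during $[\alpha,2\alpha]$ is at most $C(\alpha,L)\,|\{v<\vartheta\}\cap Q^-_{\ell_\eta}|/(\tau|B_{\ell_\eta}|)\cdot|B_1|\le C'(\alpha,L)\,\varepsilon_\eta$ by \eqref{eq:cold-fraction}. By Markov's inequality, this occupation exceeds $\alpha/2$ with probability at most $2C'\varepsilon_\eta/\alpha\le\eta_1$, after choosing $\varepsilon_\eta$ small.

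On the complement of both bad events, the path spends real time at least $\alpha\tau/2$ in the hot set, so the exponential factor is at most $e^{-h_\vartheta\alpha\tau/2}$. Finally I choose $\tau_\eta$ large enough that $e^{-h_\vartheta\alpha\tau_\eta/2}\le\eta_1$. Summing the three contributions gives $\Phi\le3\eta_1=\eta(\nu/\kappa)^{d/2}$, as required.

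\textbf{Order of constants and main obstacle.} The constants must be fixed in the order $L$, then $\alpha$, then $\varepsilon_\eta$, then $\tau_\eta$. This order is consistent because $\alpha$ and $C'$ are scale-invariant: they do not depend on $\tau$.

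The main obstacle is that the hypothesis \eqref{eq:cold-fraction} controls coldness only inside the cylinder, while the bridge density is singular at both ends. Together these force the restriction to the window $[\alpha,2\alpha]$ near the final time $t$, and they force the exit-from-the-ball estimate to be uniform in the endpoint. A second point to verify is the Feynman--Kac representation itself: it requires $g(v)$ to be locally bounded along the solution, which holds for the classical solution on the finite time interval, and the bound $g\le c_2e^{Zv^\rho}$ is not needed here.
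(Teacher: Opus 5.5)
Your proposal is correct and follows the paper's proof closely. It uses the same reduction to the kernel bound $\Gamma(t,x;t-\tau,y)\le\eta G_\kappa(\tau,x-y)$ and the same far/near split via $L_\eta$. It uses the same window $[t-2\delta\tau,t-\delta\tau]$, with the cold part controlled by a bound on the bridge (Gaussian product) density plus the hypothesis, and the out-of-ball part by Gaussian localization of the bridge. It also fixes the constants $L_\eta,\delta_\eta,\varepsilon_\eta,\tau_\eta$ in the same order.

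The only real difference is technique. The paper argues analytically: it derives $(1+h_\vartheta\delta\tau)\Gamma\le G_\nu+h_\vartheta(I_{\mathrm{cold}}+I_{\mathrm{out}})$ from Duhamel's formula and the semigroup property, which is a first-moment argument. You instead use the Feynman--Kac representation, Markov's inequality on the occupation time, and a pathwise exponential bound. Finally, the boundary case $v=\vartheta$ needs no adjustment, because continuity of $g$ already gives $g(\vartheta)\ge h_\vartheta$.
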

We postpone the proof of this lemma until Section~\ref{sec:aver-lem-proof} below. As we have mentioned, together with
Lemma~\ref{lem:force-gen-boundbis}  it will allow us to close the argument.  
 
\subsection{The discrete time Duhamel steps}\label{sec:duhamel}

We now prove   Lemma~\ref{lem-26aug902} 
that, as we have mentioned, implies the conclusion of Theorem~\ref{thm-26aug902}. Let us fix $\eta\in(0,1)$, choose $\tau_\eta$ as in Lemma~\ref{lem:hot-zones-u-lossbis}
and note that it suffices to prove \eqref{26aug912} for~$t>2\tau_\eta$ since $v(t,x)$ is a priori bounded for all $0\le t\le 2\tau_\eta$ by a constant that depends on~$\tau_\eta$. 
The proof is based on the iterative Duhamel expansion, in time steps of the size $\tau_\eta$. Very roughly, the idea is that in the regions where $v(t,x)$ is large on average,
the fuel concentration will drop because of the conclusion of Lemma~\ref{lem:hot-zones-u-lossbis}. On the other hand, the fuel concentration allows us to bound heat semi-group averages
of $v(t,x)$ because of Lemma~\ref{lem:force-gen-boundbis}.

We introduce the function
\begin{equation}\label{26aug916}
V(t,x) = [\cS_\kappa(\tau_\eta)W(t-\tau_\eta,\cdot)](x),
\end{equation}
with $W(t,x)$ defined by~\eqref{26aug910} and note that the estimate \eqref{26aug908}, that we seek to prove, is equivalent to the bound
\begin{equation}\label{26aug1018}
V(t,x)\le C_\tau,~~\hbox{ for all $t>\tau_\eta$ and $x\in\Rm^d$.}
\end{equation}
The function $W(t,x)$ satisfies
\begin{equation}
W_t=\kappa\Delta W+q(t,x).
\end{equation}
Thus, for any $t>\tau_\eta$, the Duhamel formula gives 
\begin{equation}\label{26aug918}
\bal
W(t,x)&=[\cS_\kappa(\tau_\eta)W(t-\tau_\eta,\cdot)](x)+\int_{t-\tau_\eta}^t [\cS_\kappa(t-s)q(s,\cdot)](x)ds\\
&=[\cS_\kappa(\tau_\eta)W(t-\tau_\eta,\cdot)](x)+\tW(t,t-\tau_\eta,x)=V(t,x)+\tW(t,t-\tau_\eta,x).
\enbal
\end{equation}
Here, $\tW(t,t-\tau,x)$ is defined by~\eqref{26aug1131}.
As $\tW(t,t-\tau, x)$ is non-negative, we see from \eqref{26aug918} that the function $V(t,x)$ 
satisfies 
\begin{equation}\label{26aug932}
V(t,x)\le W(t,x)\le v(t,x).
\end{equation}
The second inequality above follows from~\eqref{26aug1142} and non-negativity of $\cS_\kappa(t)v_0$. In addition, 
(\ref{26aug918}) implies the following Duhamel formula for the function $V(t,x)$:
\begin{equation}\label{X-recursivebis}
\bal
V(t,x) &=[\cS_\kappa(\tau_\eta)\circ \cS_\kappa(\tau_\eta)W(t-2\tau_\eta,\cdot)](x)+[\cS_\kappa(\tau_\eta)\tW(t-\tau_\eta,t-2\tau_\eta,\cdot)](x)
\\
&= [\cS_\kappa(\tau_\eta)V(t-\tau_\eta,\cdot)](x) + [\cS_\kappa(\tau_\eta)\tW(t-\tau_\eta,t-2\tau_\eta,\cdot)](x). 
\enbal
\end{equation}
Lemma~\ref{lem:force-gen-boundbis} gives us a uniform bound on the second term above:
\begin{equation}
[\cS_\kappa(\tau_\eta)\widetilde W(t-\tau_\eta,t-2\tau_\eta,\cdot)](x)\le M_{\kappa,\nu} [\cS_\kappa(2\tau_\eta)u(t-2\tau_\eta,\cdot)](x)\le  KM_{\kappa,\nu}.
\end{equation}

Thus, our main concern is to deal with the first term on the right side of~\eqref{X-recursivebis}.  Let us recall the parabolic BMO bound \eqref{jul2302}
on the function $v(t,x)$ and the John-Nirenberg inequality~\eqref{jul2536}. We may use them to find $Z_\eta > 0$ sufficiently large, so that 
\begin{equation}\label{26aug922}
 \farc{|\{v < \vartheta\}\cap Q_{\ell_\eta}^-(t, x)| }{\tau_\eta|B_{\ell_\eta}(0)|}\le \varepsilon_\eta,~~\hbox{ for all $x\in\Rm^d$ s.t.   $(v)_{Q_{\ell_\eta}^-(t, x)} > Z_\eta$.} 
\end{equation}
For a given $t\ge 0$, we now define the ``hot'' set 
\begin{equation}
U_t = \big\{x\in\Rm^d:~ (v)_{Q_{\ell_\eta}^-(t, x)} > Z_\eta\big\}.
\end{equation}
Proposition~\ref{prop-26jul2602} allows us to find $M_\eta>0$ so that 
\begin{equation}\label{26aug933}
v(t,x)\le M_\eta,~~\hbox{ for all $t>0$ and $x\not\in U_t$.}
\end{equation}
Thus, our main task is to bound $v(t,x)$ on the hot set $U_t$. 
%
Let us observe that Lemma~\ref{lem:hot-zones-u-lossbis} and~\eqref{26aug922} 
imply that 
\begin{equation}\label{26aug923}
u(t,x)\le\eta [\cS_\kappa(\tau_\eta)u(t-\tau_\eta,\cdot)](x),~~\hbox{for all $t\ge 0$ and $x\in U_t$.}
\end{equation}
%
%

We will iterate the Duhamel formula~\eqref{X-recursivebis} for the function $V(t,x)$, backward in time in steps of the size $\tau_\eta$. 
To this end, we divide the first term $[\cS_\kappa(\tau_\eta)V(t-\tau_\eta,\cdot)](x)$ 
on the right side of~\eqref{X-recursivebis} into the hot and the cold parts.
On the hot parts we will rely on the exponential decay~\eqref{26aug923} of~$u(t,x)$, while on the
cold parts we will use the a priori bound~\eqref{26aug933}. 

To perform the hot-cold decomposition and set up the iteration, fix $t>2\tau_\eta$ and define iteratively the operators:
\begin{equation}\label{26aug926}
\cT^t_{n+1} = \cT^t_n \circ \cS_\kappa(\tau_\eta)\circ\one(x \in U_{t-(n+1)\tau_\eta}),~~\cT^t_0 = I,
\end{equation}
and
\begin{equation}\label{26aug927}
\cF^t_{n+1} = \cT^t_n \circ \cS_\kappa(\tau_\eta)\circ\one(x \not\in U_{t-(n+1)\tau_\eta}),~~\cF^t_0 = 0.
\end{equation}
These operators come from iterating the leading term in~\eqref{X-recursivebis} 
backward in time, and splitting
\begin{equation}
\cS_\kappa(\tau_\eta)=\cS_\kappa(\tau_\eta)\circ\one(x \in U_{t-(n+1)\tau_\eta})
+\cS_\kappa(\tau_\eta)\circ\one(x \not\in U_{t-(n+1)\tau_\eta})
\end{equation}
at each time step of the size $\tau_\eta$. 

As $\cS_\kappa(\tau_\eta)$ preserves positivity and 
\begin{equation}
\|\cS_\kappa(\tau_\eta)\|_{L^\infty\to L^\infty}=1,
\end{equation}
it follows that
\begin{equation}\label{26aug930}
\|\cT_n^t\|_{L^\infty\to L^\infty}\le 1,~~\|\cF_n^t\|_{L^\infty\to L^\infty}\le 1.
\end{equation}
In addition, the operators $\cT_n^t$ and $\cF_n^t$ also preserve positivity and are monotonic:  
if $p_1(x)\ge p_2(x)$ for all $x\in\Rm^d$, then 
\begin{equation}
\hbox{$\cT_n^tp_1(x)\ge \cT_n^tp_2(x)$ for all $x\in\Rm^d$ and $n\ge 1$,}
\end{equation}
and
\begin{equation}
\hbox{$\cF_n^tp_1(x)\ge \cF_n^tp_2(x)$ for all $x\in\Rm^d$ and $n\ge 1$.}
\end{equation}

To iterate the Duhamel formula~\eqref{X-recursivebis}, we note that
\begin{equation}
\cS_\kappa(\tau_\eta)=\cT_1^t+\cF_1^t,
\end{equation}
and we re-write the second line of~\eqref{X-recursivebis} as 
\begin{equation}\label{26aug924}
V(t,\cdot) =\cT^t_1V(t-\tau_\eta,\cdot) + \cF^t_1V(t-\tau_\eta,\cdot) +\cS_\kappa(\tau_\eta)\tW(t-\tau_\eta,t-2\tau_\eta,\cdot).
\end{equation}
In turn, we write (\ref{26aug924}) for $t>(n+2)\tau_\eta$ as 
\begin{equation}\label{26aug925}
\bal
V(t-n\tau_\eta,\cdot) &=\cT^{t-n\tau_\eta}_1V(t-(n+1)\tau_\eta,\cdot) + \cF^{t-n\tau_\eta}_1V(t-(n+1)\tau_\eta,\cdot) \\
&+\cS_\kappa(\tau)\tW(t-(n+1)\tau_\eta,t-(n+2)\tau_\eta,\cdot).
\enbal
\end{equation}
The iterative definition \eqref{26aug926} gives
\begin{equation}\label{26aug1002}
\bal
(\cT_n^t\circ \cT^{t-n\tau_\eta}_1)V(t-(n+1)\tau_\eta,\cdot)&=(\cT_n^t\circ \cS_\kappa(\tau_\eta)[\one(x \in U_{t-(n+1)\tau_\eta})V(t-(n+1)\tau_\eta,\cdot)]\\
&= \cT^t_{n+1}V(t-(n+1)\tau_\eta,\cdot) .
\enbal
\end{equation}
On the other hand, \eqref{26aug927} leads to
\begin{equation}\label{26aug928}
\bal
(\cT_n^t\circ \cF^{t-n\tau_\eta}_1)V(t-(n+1)\tau_\eta,\cdot)&=(\cT_n^t\circ \cS_\kappa(\tau_\eta)[\one(x \not\in U_{t-(n+1)\tau_\eta})V(t-(n+1)\tau_\eta,\cdot)]\\
&= \cF^t_{n+1}V(t-(n+1)\tau_\eta,\cdot) .
\enbal
\end{equation}
Therefore, applying $\cT_n^t$ to both sides of \eqref{26aug925} and using \eqref{26aug1002} and \eqref{26aug928}, we obtain
\begin{equation}\label{26aug929}
\bal
\cT^t_nV(t-n\tau_\eta,\cdot) &= \cT^t_{n+1}V(t-(n+1)\tau_\eta,\cdot) + \cF^t_{n+1}V(t-(n+1)\tau_\eta,\cdot) \\
&+ \cT^t_n\cS_\kappa(\tau_\eta)\tW(t-(n+1)\tau_\eta,t-(n+2)\tau_\eta).
\enbal
\end{equation}
Summing \eqref{26aug929} over $0\le n\le N-1$ gives 
\begin{equation}
\bal
\sum_{n=0}^{N-1}\cT^t_nV(t-n\tau_\eta,\cdot)&=
\sum_{n=0}^{N-1}\cT^t_{n+1}V(t-(n+1)\tau_\eta,\cdot) +\sum_{n=0}^{N-1} \cF^t_{n+1}V(t-(n+1)\tau_\eta,\cdot) \\
&+\sum_{n=0}^{N-1} \cT^t_n\cS_\kappa(\tau_\eta)\tW(t-(n+1)\tau_\eta,t-(n+2)\tau_\eta).
\enbal
\end{equation}
Recalling that $\cT_0^t=I$ and setting $t_0=t-N\tau_\eta$, we obtain 
the iterated version of the Duhamel formula~\eqref{X-recursivebis}
\begin{equation}\label{26aug931}
V(t,\cdot)= \cT^t_NV(t_0,\cdot) + \sum_{n=1}^{N}\cF^t_{n}V(t-n\tau_\eta,\cdot)+  
\sum_{n=0}^{N-1}\cT^t_n\cS_\kappa(\tau_\eta)\tW(t-(n+1)\tau_\eta,t-(n+2)\tau_\eta).
\end{equation}
We now choose $N$ so that 
\begin{equation}\label{26aug1014}
\tau_\eta\le t_0\le 2\tau_\eta.
\end{equation} 
We will bound the three
terms on the right side of~\eqref{26aug931} separately. 

Going back to \eqref{26aug930}, we see that the first term on the right side of~\eqref{26aug931} is bounded because of the choice of $N$ that gives~\eqref{26aug1014}: 
\begin{equation}
I_1:=\cT^t_NV(t_0,\cdot)\le \sup_{0\le t \le 2\tau_\eta}\|V(t,\cdot)\|_{L^\infty} \le C,
\end{equation}
with some $C$ that depends on the initial conditions. 

For the second term on the right side of (\ref{26aug931}), 
we recall that the definition~\eqref{26aug927}
of the operators $\cF_n^t$ involves the restriction of the argument
to the complement of the set $U_{t-n\tau_\eta}$ and also 
note that, because of \eqref{26aug932} and~\eqref{26aug933}, together with the monotonicity of $\cF_n^t$, we have 
\begin{equation}
\one(x \not\in U_{t-n\tau_\eta}) V(t-n\tau_\eta,x)\le M_{\eta}. 
\end{equation}
It follows that 
\begin{equation}
\cF^t_{n}V(t-n\tau,x)\le M_\eta \cF_n^t[1](x),
\end{equation}
while
\begin{equation}
\bal
\cF_n^t[1]&=\cT_{n-1}^t\circ \cS_\kappa(\tau_\eta)[\one(x\not\in U_{t-n\tau_\eta})]
 =\cT_{n-1}^t\circ \cS_\kappa(\tau_\eta)[1-\one(x\in U_{t-n\tau_\eta})]\\
 &=\cT_{n-1}^t[1]-\cT_n^t[1].
\enbal
\end{equation}
Therefore,  the second sum on the right side of~\eqref{26aug931} is  telescoping: 
\begin{equation}\label{26aug1008}
 \begin{aligned}
I_2 &:=\sum_{n=1}^{N}\cF^t_{n}V(t-n\tau_\eta,\cdot)\le M_\eta\sum_{n=1}^{N}\cF^t_{n}[1]
= M_\eta\sum_{n=1}^{N}\big(\cT^t_{n-1}(1)-\cT^t_n(1)\big) \\
&= M_\eta(\cT^t_0(1) -\cT^t_N(1))\le M_\eta.
    \end{aligned}
\end{equation}

Finally, we look at the last term on the right side of~\eqref{26aug931}. 
Using Lemma~\ref{lem:force-gen-boundbis} and the monotonicity of the operators $\cT_n^t$, we obtain 
\begin{equation}\label{26aug1006}
    \begin{aligned}
I_3 &:=\sum_{n=0}^{N-1}\cT^t_n\cS_\kappa(\tau_\eta)\tW(t-(n+1)\tau_\eta,t-(n+2)\tau_\eta)
\\
&\le M_{\kappa,\nu}\sum_{n=0}^{N-1}(\cT^t_n\cS_\kappa(\tau_\eta))\cS_\kappa(\tau_\eta)u(t-(n+2)\tau_\eta).
\end{aligned}
\end{equation}
The definitions~\eqref{26aug926} and~\eqref{26aug927} imply that
\begin{equation}\label{26aug1010}
\cT_n^t\cS_\kappa(\tau_\eta)=\cT_{n+1}^t+\cF_{n+1}^t,
\end{equation}
so that
\begin{equation}
\cT_n^t\cS_\kappa(\tau_\eta)\cS_\kappa(\tau_\eta)=[\cT_{n+1}^t+\cF_{n+1}^t]\cS_\kappa(\tau_\eta)=\cT_{n+2}^t+\cF_{n+2}^t+\cF_{n+1}^t\cS_\kappa(\tau_\eta).
\end{equation}
Using this identity in~\eqref{26aug1006} gives 
\begin{equation}
\begin{aligned}
I_3&\le M_{\kappa,\nu} \sum_{n=0}^{N-1}\Big(\cT^t_{n+2}u(t-(n+2)\tau_\eta) + \cF^t_{n+2}u(t-(n+2)\tau_\eta) + \cF^t_{n+1}\cS_\kappa(\tau_\eta)u(t-(n+2)\tau_\eta)\Big)\\
        &= M_{\kappa,\nu}(B_1+B_2+B_3).
    \end{aligned}
\end{equation}
As $u(t,x)$ is uniformly bounded and the operators $\cF_n^t$ are monotonic, the last two terms can be bounded as in~\eqref{26aug1008}:
\begin{equation}
B_2+B_3\le 2  K\sum_{n=0}^{N+1}\cF^t_{n}[1]\le 2M_\eta K.
\end{equation}
As for $B_1$, we can use Lemma~\ref{lem:hot-zones-u-lossbis} since the definition~\eqref{26aug926} of the operator $\cT_n^t$ 
involves the restriction to the hot set $U_{t-n\tau_\eta}$.
This gives 
\begin{equation}
\begin{aligned}
    B_1 &= \sum_{n=2}^{N+1}\cT^t_{n}u(t-n\tau_\eta) \le \eta \sum_{n=2}^{N+1}\cT^t_{n}\cS_\kappa(\tau)u(t-(n+1)\tau_\eta) \\&
    = \eta\sum_{n=2}^{N+1}\cT^t_{n+1}u(t-(n+1)\tau_\eta) + \eta \sum_{n=2}^{N+1}\cF^t_{n+1}u(t-(n+1)\tau_\eta)
    \\& \le \eta B_1 + \cT^t_{N+2}u(t-(N+2)\tau_\eta)+\eta K\sum_{n=2}^{N+1}\cF^t_{n+1}[1]\le \eta B_1+(1+\eta)K.
\end{aligned}
\end{equation}
We used the identity~\eqref{26aug1010} above, as well as \eqref{26aug1008} once again. As $\eta\in(0,1)$, 
we see that $B_1$ is also uniformly bounded. This finishes the proof of~\eqref{26aug1018} and thus of Lemma~\ref{lem-26aug902} as well.~$\Box$ 

In order to finish the proof of Theorem~\ref{thm-26aug902}, it remains to prove Lemma~\ref{lem:hot-zones-u-lossbis}.

\section{The proof of Lemma~\ref{lem:hot-zones-u-lossbis}}\label{sec:aver-lem-proof}

We fix $\tau>0$ and $\eps>0$ to be chosen later and assume that $t>0$ and $x\in\Rm^d$ satisfy  
the assumption~\eqref{eq:cold-fraction}:  
\begin{equation}\label{26aug1021}
 \frac{\bigl|\{v<\vartheta\}\cap Q^-_{\ell}(t,x)\bigr|}
      {\tau|B_{\ell}(0)|}
\le\varepsilon.
\end{equation}
with $\ell=\sqrt{\tau}$. 

Let
$\Gamma(t,x;s,y)$ be the fundamental solution to
\begin{equation}
\partial_t w-\nu\Delta w+g(v)w=0,~~x\in\Rm^d.
\end{equation}
Then, $u(t,x)$ satisfies
\[
u(t,x)=\int_{\Rm^d}\Gamma(t,x;t-\tau,y)u(t-\tau,y)\,dy.
\]
The claim~\eqref{eq:fuel-loss} of Lemma~\ref{lem:hot-zones-u-lossbis} will follow if we find $\tau_\eta>0$ and $\eps_\eta>0$ such that 
\begin{equation}  \label{eq:fund-sol-bound} 
\Gamma(t,x;t-\tau,y)\le
    \eta G_\kappa(\tau,x-y),~~\hbox{ for all $y\in\Rm^d$},
 \end{equation}
for all $x\in\Rm^d$ that satisfy the assumption~\eqref{26aug1021} with $\eps=\eps_\eta$, $\tau=\tau_\eta$ and $\ell=\sqrt{\tau_\eta}$.   

We now prove~\eqref{eq:fund-sol-bound}. 
Since $g(v)\ge0$, the comparison to the heat equation yields
\begin{equation} \label{Gamma-bound}
0\le \Gamma(t,x;s,y)\le G_\nu(t-s,x-y).
\end{equation}
Choose now $L_\eta>1$ so large that
\begin{equation}
    \left(\frac{\kappa}{\nu}\right)^{d/2}
    \exp\!\left(
      -\frac{\kappa-\nu}{4\kappa\nu}L_\eta^2
    \right)
    \le \eta.
    \label{L-condition}
\end{equation}
 First, note that if $|x-y|\ge L_\eta\sqrt\tau$, then (\ref{Gamma-bound}) and the identity
\begin{equation}
\bal
\frac{G_\nu(\tau,x-y)}{G_\kappa(\tau,x-y)}=\Big(\farc{\kappa}{\nu}\Big)^{d/2}
\exp\Big[-|x-y|^2\Big(\farc{1}{4\nu\tau}-\farc{1}{4\kappa\tau}\Big)\Big]
 =\Big(\frac{\kappa}{\nu}\Big)^{d/2}
    \exp\Big[
      -\frac{\kappa-\nu}{4\kappa\nu}
       \frac{|x-y|^2}{\tau}\Big]
\enbal
\end{equation}
imply (\ref{eq:fund-sol-bound}). 

It remains to consider $y\in\Rm^d$ such that 
\begin{equation} \label{x-y-small}
    |x-y|<L_\eta\sqrt\tau=L_\eta\ell,
\end{equation}
with
\begin{equation}\label{26aug1038}
\ell=\sqrt{\tau}.
\end{equation}
It is here that we will need the assumption~\eqref{26aug1021}, with an appropriate $\eps_\eta>0$ and $\tau_\eta>0$. 
Duhamel's formula for the fundamental solution together with the upper bound~\eqref{Gamma-bound} gives
\begin{equation}\label{26aug1023}
\bal
 G_\nu(\tau,x-y)-\Gamma(t,x;t-\tau,y)
&=\int_{t-\tau}^{t}\!\int_{\Rm^d}G_\nu(t-s,x-z)g(v(s,z))
\Gamma(s,z;t-\tau,y)\,dz\,ds\\
 &\ge   \int_{t-\tau}^{t}\!\int_{\Rm^d}\Gamma(t,x;s,z)g(v(s,z))\Gamma(s,z;t-\tau,y)
\\
& \ge h_\vartheta  \int_{t-\tau}^{t}\int_{\{v(s,z)\ge \vartheta\}}\Gamma(t,x;s,z) \Gamma(s,z;t-\tau,y).
\enbal
\end{equation}
We used the assumption~\eqref{eq::vatheta-defbis} on the nonlinearity $g(v)$ in the last inequality above. 
We fix~$\delta\in(0,1/4)$ and further restrict the domain of integration on the right side of~\eqref{26aug1023} to 
\begin{equation}\label{26aug1022}
\bal
 G_\nu(\tau,x-y)-\Gamma(t,x;t-\tau,y)
& \ge h_\vartheta  \int_{t-2\delta\tau}^{t-\delta\tau}\int_{B_{\ell}(x)\cap\{v(s,z)\ge \vartheta\}}\Gamma(t,x;s,z) \Gamma(s,z;t-\tau,y).
\enbal
\end{equation}
 
To bound the right side of~\eqref{26aug1022}, note that
the semi-group property of the fundamental solution $\Gamma$ gives, for every $s\in(t-\tau,t)$,
\[
    \int_{\Rm^d}
      \Gamma(t,x;s,z)
      \Gamma(s,z;t-\tau,y)\,dz
    =
    \Gamma(t,x;t-\tau,y).
\]
Using this identity in~\eqref{26aug1022} gives
\begin{equation}
\begin{aligned}
G_\nu(\tau,x-y)-\Gamma(t,x;t-\tau,y)\ge
h_\vartheta \delta\tau\,\Gamma(t,x;t-\tau,y)-h_\vartheta I_{\mathrm{cold}}-h_\vartheta I_{\mathrm{out}}.
\end{aligned}
    \label{eq-G-Gamma-dif}
\end{equation}
Here, we have set 
\begin{equation}\label{26aug1029}
 I_{\mathrm{cold}}=
    \int_{t-2\delta\tau}^{t-\delta\tau}\int_{B_\ell(x)\cap\{v(s,\cdot)<\vartheta\}}
      \Gamma(t,x;s,z)
      \Gamma(s,z;t-\tau,y)\,dz\,ds,
\end{equation}
and
\begin{equation}\label{26aug1036}
    I_{\mathrm{out}}=
    \int_{t-2\delta\tau}^{t-\delta\tau}\int_{B_\ell^c(x)}
      \Gamma(t,x;s,z)
      \Gamma(s,z;t-\tau,y)\,dz\,ds.
\end{equation}
Thus, we have
\begin{equation}\label{26aug1024}
\Gamma(t,x;t-\tau,y)\le \farc{1}{1+h_\vartheta \delta\tau}\Big[G_\nu(\tau,x-y)+h_\vartheta I_{\mathrm{cold}}+h_\vartheta I_{\mathrm{out}}\Big].
\end{equation}
Note that we can make the pre-factor on the right side of \eqref{26aug1024} smaller than $\eta$ by taking $\tau>0$ sufficiently large. 
Moreover, as $\nu\le\kappa$, we have 
\begin{equation}\label{26aug1210}
G_\nu(t,x)\le \farc{\kappa^{d/2}}{\nu^{d/2}}G_\kappa(t,x).
\end{equation}
To obtain~\eqref{eq:fund-sol-bound}
we thus need to bound the terms $I_{\mathrm{cold}}$ and $I_{\mathrm{out}}$ by multiples of~$G_\nu(\tau,x-y)$. 
The integrand in their definitions can be bounded using (\ref{Gamma-bound}) as
\begin{equation}\label{26aug1035}
\Gamma(t,x;s,z)\Gamma(s,z;t-\tau,y)\le G_\nu(t-s,x-z)G_\nu(s-t+\tau,z-y).
\end{equation}
Observe that
\begin{equation}\label{26aug1027}
\bal
&G_\nu(t-s,x-z)G_\nu(s-t+\tau,z-y)=\farc{e^{-|x-z|^2/(4\nu(t-s))}e^{-|z-y|^2/(4\nu(s-t+\tau))}}{(4\pi\nu)^d(t-s)^{d/2}(s-t+\tau)^{d/2}} \\
&=G_\nu(\tau,x-y)  \farc{\tau^{d/2}e^{|x-y|^2/(4\nu\tau)}e^{-|x-z|^2/(4\nu(t-s))}e^{-|z-y|^2/(4\nu(s-t+\tau))}}{(4\pi\nu)^{d/2}(t-s)^{d/2}(s-t+\tau)^{d/2}}\\
&=G_\nu(\tau,x-y) \farc{1}{(4\pi a_s)^{d/2}}e^{|x-y|^2/(4\nu\tau)}e^{-|x-z|^2/(4\nu(t-s))}e^{-|z-y|^2/(4\nu(s-t+\tau))}.
\enbal
\end{equation}
Here, we have set 
\begin{equation}
a_s=\nu\frac{(t-s)(s-t+\tau)}{\tau}.
\end{equation}
As $t-2\delta\tau<s<t-\delta\tau$ and $\delta\in(0,1/4)$, this constant satisfies
\begin{equation}  \label{a-s-bounds}
\frac{1}{2}\nu\delta\tau\le a_s\le 2\nu\delta\tau.
\end{equation}
The exponent in~\eqref{26aug1027} can be written as 
\begin{equation}\label{26aug1028}
\bal
&E:=\frac{|x-y|^2}{4\nu\tau}-\farc{|x-z|^2}{4\nu(t-s)}-\farc{|z-y|^2}{4\nu(s-t+\tau)}\\
&=\farc{1}{4\nu\tau(t-s)(s-t+\tau)}\Big[(t-s)(s-t+\tau)|x-y|^2-\tau(s-t+\tau)|x-z|^2-\tau(t-s)|z-y|^2\Big],
\enbal
\end{equation}
while
\begin{equation}
\bal
&(t-s)(s-t+\tau)|x-y|^2-\tau(s-t+\tau)|x-z|^2-\tau(t-s)|z-y|^2\\
&=\tau(t-s)\big[|x-y|^2+|x-z|^2-|z-y|^2\big]-(t-s)^2|x-y|^2-\tau^2|x-z|^2\\
&=\tau(t-s)\big[2x^2-2x\cdot y-2x\cdot z+2z\cdot y\big]-(t-s)^2|x-y|^2-\tau^2|x-z|^2\\
&=2\tau(t-s)(x-y)\cdot(x-z)-(t-s)^2|x-y|^2-\tau^2|x-z|^2\\
&=-\big|\tau(x-z)-(t-s)(x-y)\big|^2.
\enbal
\end{equation}
Using this identity in~\eqref{26aug1028} gives 
\begin{equation}\label{26aug1032}
\bal
E=-\farc{1}{4 a_s\tau^2}\big|\tau(x-z)-(t-s)(x-y)\big|^2=-\farc{1}{4 a_s}|z-m_s|^2,
\enbal
\end{equation}
with
\begin{equation}
m_s=x+\frac{t-s}{\tau}(y-x).
\end{equation}    
Inserting the expression \eqref{26aug1032} for $E$ into \eqref{26aug1027}, we obtain 
\begin{equation}\label{26aug1031}
\bal
&G_\nu(t-s,x-z)G_\nu(s-t+\tau,z-y)=G_\nu(\tau,x-y) \farc{1}{(4\pi a_s)^{d/2}} e^{-|z-m_s|^2/(4a_s)}.
\enbal
\end{equation}
%
%
%
Going back to (\ref{26aug1029}) and using \eqref{26aug1035} and \eqref{26aug1031} leads to 
\begin{equation}\label{26aug1033}
\bal
 I_{\mathrm{cold}}&=
    \int_{t-2\delta\tau}^{t-\delta\tau}\int_{B_\ell(x)\cap\{v(s,\cdot)<\vartheta\}}
      \Gamma(t,x;s,z)
      \Gamma(s,z;t-\tau,y)\,dz\,ds\\
      &\le    \int_{t-2\delta\tau}^{t-\delta\tau}\int_{B_\ell(x)\cap\{v(s,\cdot)<\vartheta\}}G_\nu(\tau,x-y) \farc{1}{(4\pi a_s)^{d/2}} e^{-|z-m_s|^2/(4a_s)}dz ds.
\enbal
\end{equation}
Recalling the bound \eqref{a-s-bounds} on $a_s$ and, crucially, the assumption~\eqref{26aug1021} on the size of the hot set, we get
\begin{equation}\label{26aug1041}
\bal
I_{\mathrm{cold}} 
&\le   C(\delta\tau)^{-d/2}G_\nu(\tau,x-y)   \left|\{v<\vartheta\}\cap
      \bigl((t-\tau,t)\times B_{\ell}(x)\bigr)\le  C(\delta\tau)^{-d/2}\eps \tau\ell^{d}G_\nu(\tau,x-y) 
    )\right|\\
    &=C\eps\tau\delta^{-d/2}G_\nu(\tau,x-y). 
\enbal
\end{equation}
%
Here, the constant $C$ depends only on the dimension~$d$ and $\nu$.

Next, we look at the outer term in \eqref{26aug1036}. As in~\eqref{26aug1033}, 
we obtain 
\begin{equation}\label{26aug1037}
\bal
    I_{\mathrm{out}}&=
    \int_{t-2\delta\tau}^{t-\delta\tau}\int_{B_\ell^c(x)}
      \Gamma(t,x;s,z)
      \Gamma(s,z;t-\tau,y)\,dz\,ds\\
      &\le  \int_{t-2\delta\tau}^{t-\delta\tau}\int_{B_\ell^c(x)}G_\nu(\tau,x-y) \farc{1}{(4\pi a_s)^{d/2}} e^{-|z-m_s|^2/(4a_s)}dz ds.
\enbal
\end{equation}
Since  we are considering the case when $x$ and $y$ satisfy (\ref{x-y-small}), we have 
\begin{equation}\label{26aug1039}
|m_s-x|=\frac{t-s}{\tau}|y-x|\le 2\delta L_\eta\sqrt{\tau}=2\delta L_\eta\ell,~~\hbox{for all $s\in[t-2\delta\tau,t-\delta\tau]$.}
\end{equation}
Choose now $\delta>0$ sufficiently small, so that 
\begin{equation}\label{26aug1042}
2\delta L_\eta\le\farc{1}{8}.
\end{equation}
Then, (\ref{26aug1039}) guarantees that 
$m_s\in B_{\ell/2}(x)$ for every $s\in[t-2\delta\tau,t-\delta\tau]$.
In addition, \eqref{a-s-bounds} implies that 
\begin{equation}
\farc{1}{2}\nu\delta\ell^2\le a_s\le \farc{3}{2}\nu\delta\ell^2.
\end{equation}
It follows that
\begin{equation}\label{26aug1040}
\bal
&\int_{B_\ell^c(x)}\farc{1}{(4\pi a_s)^{d/2}} e^{-|z-m_s|^2/(4a_s)}dz\le \int_{B_{\ell/2}^c(m_s)}\farc{1}{(4\pi a_s)^{d/2}} e^{-|z-m_s|^2/(4a_s)}dz\\
&\le  \int_{B_{\ell/2}^c(0)}\farc{1}{(2\pi \nu\delta\ell^2)^{d/2}} e^{-|z|^2/(8\nu\delta\ell^2)}dz=
\farc{1}{(2\pi \nu)^{d/2}}\int_{B_{(2\sqrt{\delta})^{-1}}^c(0)} e^{-|y|^2/(8\nu)}dy\\
&\le C_1e^{-c_2/\delta},
\enbal
\end{equation}
with the constants $C_1$ and $c_2$ that depend only on the dimension $d$ and diffusivity $\nu$. 
Using the upper bound~\eqref{26aug1040} in~\eqref{26aug1037} gives 
\begin{equation}
    I_{\mathrm{out}}
    \le
    C\delta\tau e^{-c/\delta}
    G_\nu(\tau,x-y).
    \label{I-out-bound}
\end{equation}
Substituting (\ref{26aug1041}) and (\ref{I-out-bound}) into (\ref{eq-G-Gamma-dif}) gives
\begin{equation}\label{26aug1024bis}
\bal
\Gamma(t,x;t-\tau,y)&\le \farc{1}{1+h_\vartheta \delta\tau}\Big[G_\nu(\tau,x-y)+h_\vartheta I_{\mathrm{cold}}+h_\vartheta I_{\mathrm{out}}\Big]\\
&\le \farc{C}{h_\vartheta \delta\tau}\Big[1+h_\vartheta \eps\tau\delta^{-d/2}+h_\vartheta\delta\tau e^{-c_2/\delta}\Big]G_\nu(\tau,x-y)\\
&=C\Big[\farc{1}{h_\vartheta \delta\tau}+\eps\delta^{-d/2-1}+e^{-c_2/\delta}\Big]G_\nu(\tau,x-y)\\
&\le
\farc{C\kappa^{d/2}}{\nu^{d/2}}\Big[\farc{1}{h_\vartheta \delta\tau}+\eps\delta^{-d/2-1}+e^{-c_2/\delta}\Big]G_\kappa(\tau,x-y).
\enbal
\end{equation}
We used the inequality~\eqref{26aug1210} in the last step above. 
%
%
%

Given $\eta>0$, we now choose~$\tau_\eta>0$, $\delta_\eta\in(0,1/4)$ and $\eps_\eta>0$ as follows. 
First, we fix $L_\eta$ by~(\ref{L-condition}). Then, we choose $\delta_\eta>0$ sufficiently small  to satisfy both  \eqref{26aug1042}
and the inequality
\begin{equation}
\farc{C\kappa^{d/2}}{\nu^{d/2}}e^{-c_2/\delta_\eta}\le\farc{\eta}3.
\end{equation}
Next, we choose $\varepsilon_\eta>0$ so small that
\[
\farc{C\kappa^{d/2}}{\nu^{d/2}}\varepsilon_\eta\delta_\eta^{-n/2-1}\le\frac{\eta}{3},
\]
Finally, we choose $\tau>0$ so large that
\[
\farc{C\kappa^{d/2}}{\nu^{d/2}}\frac{1}{h_\vartheta\delta\tau}\le\frac{\eta}{3}.
\]
Then (\ref{26aug1024bis}) implies
\[
\Gamma(t,x;t-\tau,y)\le 
    \eta G_\kappa(\tau,x-y),
\]
which proves (\ref{eq:fund-sol-bound}) also for $x,y\in\Rm^d$ that satisfy (\ref{x-y-small}), finishing the proof of Lemma~\ref{lem:hot-zones-u-lossbis}.~$\Box$ 

\bibliographystyle{plain}
\bibliography{references}

\end{document}